\def\endproof{\relax\ifmmode\expandafter\endproofmath\else
  \unskip\nobreak\hfil\penalty50\hskip.75em\hbox{}\nobreak\hfil\bull
  {\parfillskip=0pt \finalhyphendemerits=0 \bigbreak}\fi}
\def\endproofmath$${\eqno\bull$$\bigbreak}
\def\bull{\vbox{\hrule\hbox{\vrule\kern3pt\vbox{\kern6pt}\kern3pt\vrule}\hrule}}
\def\md#1{\ifmmode{\cal M}_\delta(#1)\else  
{${\cal M}_\delta(#1)$}\fi}
\def\mb#1{\ifmmode{\cal M}_\delta^0(#1)\else  
{${\cal M}_\delta^0(#1)$}\fi}
\def\mdc#1#2{\ifmmode{\cal M}_{\delta,#1}(#2)\else    
{${\cal M}_{\delta,#1}(#2)$}\fi}
\def\mbc#1#2{\ifmmode{\cal M}_{\delta,#1}^0(#2)\else   
{${\cal M}_{\delta,#1}^0(#2)$}\fi}
\def\mm{\ifmmode{\cal M}\else {${\cal M}$}\fi}
\def\msigma{\ifmmode{\cal M}^\sigma\else {${\cal M}^\sigma$}\fi}
\newtheorem{theorem}{Theorem}[subsection]
\newtheorem{proposition}[theorem]{Proposition}
\newtheorem{lemma}[theorem]{Lemma}
\newtheorem{corollary}[theorem]{Corollary}
\newtheorem{D}[theorem]{Definition}
\newtheorem{R}[theorem]{Remark}
\newenvironment{remark}{\begin{R}\rm }{\end{R}}
\def\ov{\overline}
\def\z2{{\mathbb Z}/{2{\mathbb Z}}}
\def\a{\alpha}
\def\L2{L^2(G)}			
\def\c{{\mathbb C}}			
\def\z{{\mathbb Z}}			
\def\p{{\mathbb P}}			
\def\x{\Delta_x}			
\begin{document}
\setlength{\parindent}{0pt} \setlength{\parskip}{2ex plus 0.4ex
minus 0.4ex}

\title{Centralizers of Commuting Elements in Compact Lie Groups}

\author{{\bf Kristen A. Nairn}\\College of St. Benedict, MN, USA\\
knairn@csbsju.edu\\
MSC Classes: 22C05 (Primary); 17B20, 22E46 (Secondary) }

\date{}
\maketitle
\begin{abstract} 
The moduli space for a flat $G$-bundle over the two-torus is completely determined by its holonomy representation. 
When $G$ is compact, connected, and simply connected, we show that the moduli space is homeomorphic to a product
of two tori mod the action of the Weyl group, or equivalently to the conjugacy classes of commuting pairs of elements in $G$. Since the component group for a non-simply connected group is given by some finite dimensional subgroup in 
the centralizer of an $n$-tuple, we use diagram automorphisms of the extended Dynkin diagram to prove properties of centralizers of pairs of elements in $G,$ followed by some explicit examples. We conclude by showing that as a result of a compact, connected, simply connected Lie group $G$ having a finite number of subgroups, each conjugate to the centralizer of any element in $G$, that there is a uniform bound on an irredundant chain of commuting elements. 
\end{abstract}

\section{Introduction}
Classifying the moduli space of gauge equivalence classes of flat connections on 
a principal $G-$bundle over a compact Riemann surface $\Sigma_g$ of genus $g$ is of 
interest from various perspectives. For example, Atiyah-Bott \cite{AB82} proved that this moduli
space is equivalent to the finite dimensional representation space 
$\{\rm{Hom}(\pi_1(\Sigma_g),G)\}/G$ by constructing a symplectic structure on the moduli space by symplectic reduction from the infinite-dimensional sympletic manifold of all connections. 

If $A$ is a flat connection on a $K-$bundle over $T^3$ then the holonomy of $A$ is defined by the 
conjugacy classes of commuting triples in $K.$ In topological field theory, vacua of Yang-Mills theory correspond to flat 
$G$ bundles. Every nontrivial triple on $T^3$ equals an additional quantum vacuum state and determines 
a distinct component of the moduli space $\mathcal{M}_G.$ If the triple
is of rank zero (rigid), then it is unique up to $G$-equivalence and 
every element can be conjugated into the maximal torus $T$ for $G.$ If 
the triple is not of rank zero, then there is an entire family of triples
with elements lying on some smaller torus inside the centralizer 
$Z_G(x,y,z).$ Kac-Smilga \cite{KS00} proved that computing the number of quantum vacuum states over $T^3$ is equivalent to classifying commuting triples in a simple, compact, simply connected Lie group $G.$ Witten \cite{W98} proved that the number of extra quantum vacuum states for a flat principal $G$-bundle over a 
spatial $3$-torus $T^3$ is the topological invariant called the {\sl Witten Index} which is equal to $g$, the dual Coxeter number of the Lie group $G.$ 

Our primary motivation comes from Borel-Friedman-Morgan \cite{BFM02}. Given $G$ is a compact, connected, semisimple Lie group, they proved that principal $G$-bundles $\zeta$ with flat connections over a maximal two torus $T^2$ are
classified up to restricted 
gauge equivalence by classifying commuting pairs of elements in the simply connected covering $\tilde{G}$ of $G$ that commute up to the center.  
The first invariant is the nontrivial characteristic 
class $[w]\in H^2(T^2,\pi_1(G))=\pi_1(G).$ By identifying $\pi_1(G)$ with a subgroup of the center
$\mathcal{C}G$ we fix a topological type of the bundle by $w(\zeta)=c\in\mathcal{C}G.$ Since the 
characteristic class is completely defined by the holonomy representation 
$\rho\colon \pi_1(T^2)\cong\z\times\z\to G$ where the images of $\rho$ 
commute then for any lifts $\tilde{x},\tilde{y}\in\tilde{G}$, we have $[\tilde{x},\tilde{y}]=[w]=c.$
Elements with this property are called $c-$pairs or "almost commuting".

\begin{D}
A pair of elements $x,y\in G$ {\bf commutes} if $[x,y]=1.$ 
A {\bf c-pair} in the simply connected covering $\tilde{G}$ 
of $G$ is a pair of elements $(x,y)$ where $x,y\in G$ such that $[x,y]=1$ and 
$[\tilde{x},\tilde{y}]=c\in\tilde{\mathcal{C}G}.$ 
\end{D} 

To understand why a flat bundle is determined by its holonomy representation note the following.
Let $G$ be a compact, connected and not necessarily simply connected Lie group and $\pi\colon \tilde{G}\to G$
the universal covering map. 
Certainly the choice of a lift $\tilde{x}$ is unique up to an element 
in $\rm{Ker}(\pi)\cong\pi_1(G)$ which is identified as a subgroup of the
center of the simply connected covering. Extending this for a 
$c$-pair: for $k\in\rm{Ker}(\pi),\ [\tilde{x},\tilde{y}]=
[k\tilde{x},k\tilde{y}]=c$ because $k\in \rm{Ker}(\pi)$ commutes with 
every element in $\tilde{G}$ and is also invariant under the choice
of $x,y.$ We may define conjugation by $\tilde{g}\in \tilde{G}$ to be
$\tilde{g}[\tilde{x},\tilde{y}]\tilde{g}^{-1}= 
[\tilde{g}\tilde{x}\tilde{g}^{-1},\tilde{g}\tilde{y}\tilde{g}^{-1}]$
satisfying $\pi(\tilde{g}\tilde{x}\tilde{g}^{-1})=g\pi(\tilde{x})g^{-1}=
gxg^{-1}.$ This lift is independent of the choice of  
$c\in\mathcal{C}\tilde{G}$ and thus our $c$-pair is well-defined.

For completeness, we recall some definitions found in \cite{Bourbaki} 
on Dynkin diagrams and root/coroot systems that we will use throughout the paper. 
Let $\Phi$ be a reduced irreducible root system for a compact connected Lie group $G,$ and 
let $\Delta=\{a_1,\ldots,a_n\}$ be a choice of simple roots for $G.$ 
Let $d$ be the highest root of $\Phi$ with respect to $\Delta.$ Set $\tilde{a}=-d$ and let 
$\tilde{\Delta}=\Delta\cup \{\tilde{a}\}$ be the extended set of simple roots. Then 
$\Delta^\vee$ is the set of coroots $a^\vee$ inverse to each root $a\in\Delta.$ 
If we define $A$ to be the unique alcove containing the origin in the positive Weyl chamber 
associated to $\Delta$ then there is a bijection between the walls of $A$ and $\tilde{\Delta}.$ 
Therefore $\tilde{\Delta}$ is the set of nodes for the extended Dynkin diagram $\tilde{D}(G).$
For each element $c\in\mathcal{C}G$ the differential $w_c\in \mathcal{W}$ of the action of the 
center on the alcove is a linear map normalizing $\tilde{\Delta}\subset t$ and the action of 
$w_c$ on the nodes of $\tilde{D}(G)$ is a diagram automorphism. Given 
a maximial torus $T\subset G$, denote Lie$(T)=\mathfrak{h}$ and the exponential map 
identifies $T$ with $\mathfrak{h}/Q^\vee$
where $Q^\vee=\sum\z a_i^\vee$ is the lattice associated to the 
coroots dual to a choice of simple roots $a_i\in\Delta$ for $G.$ Denote the affine Weyl group by $W_{aff}.$ 
The alcove is defined over the maximal torus $T\subseteq G$
as $A= \mathfrak{h}/W_{aff}(\Phi) \subseteq \mathfrak{h}$ where
$W_{aff}(\Phi)$ acts simply transitively on the set of alcoves in the vector space $V;$ thus
there is an induced action of the center $\mathcal{C}G$ on $A.$
x

\section{Component Group of the Centralizer of Commuting Pairs}
The work in \cite{BFM02} gave an explicit characterization of the moduli space of $c$-pairs in terms of the extended 
coroot diagram of a simply connected group $G$ and the action of the Weyl group on that diagram. 
This beneficial relationship between the root/coroot system and holonomy plays a crucial role.
Throughout, we assume that $G$ is a compact connected Lie group, unless otherwise defined. 
Note that if $G$ is disconnected, then there is the appearance of a 
$c-"1-$chain" coming from a component which may be a finite group of a certain order. 
A group $G$ is {\sl reductive} if any representation is irreducible. 
Notice that when a group is compact, it is equivalent to being reductive. 
We will use the following theorem by Borel (\cite{B62}, Theorem 5)
\begin{theorem} \label{Borel}
Let $G$ be a compact, connected and simply connected Lie group. Then 
$Z_G(x)$ is connected. 
\end{theorem}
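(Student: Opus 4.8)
The plan is to show that every $g\in Z_G(x)$ lies in the identity component $Z_G(x)^0$, so I begin by fixing a maximal torus $T$ containing $x$, which exists because $G$ is compact and connected. Since $T$ is connected and centralizes $x$, we have $T\subseteq Z_G(x)^0$, and $T$ is a maximal torus of the compact connected group $Z_G(x)^0$. For $g\in Z_G(x)$ the conjugate $gTg^{-1}$ again centralizes $x$: for $t\in T$ one computes $(gtg^{-1})\,x\,(gtg^{-1})^{-1}=g\,t\,(g^{-1}xg)\,t^{-1}g^{-1}=g(txt^{-1})g^{-1}=gxg^{-1}=x$, using that $g$ fixes $x$ and that $T$ is abelian. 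Hence $gTg^{-1}$ is a second maximal torus of $Z_G(x)^0$.

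First I would invoke the conjugacy of maximal tori inside the connected group $Z_G(x)^0$: there is $h\in Z_G(x)^0$ with $h(gTg^{-1})h^{-1}=T$. Then $n:=hg$ normalizes $T$ and still centralizes $x$, so it represents a class $\overline n$ in the Weyl group $\mathcal{W}=N_G(T)/T$ that fixes $x$. Since $h\in Z_G(x)^0$, it suffices to prove $n\in Z_G(x)^0$; equivalently, that the stabilizer $\mathcal{W}_x=\{w\in\mathcal{W}:w\cdot x=x\}$ is realized inside $Z_G(x)^0$.

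The key step, and the only place simple connectivity is used, is to identify $\mathcal{W}_x$ with the subgroup generated by the reflections $s_a$ for those roots $a\in\Phi$ with $a(x)=1$. Here I would pass to the alcove picture: writing $x=\exp(\xi)$ with $\xi\in\overline{A}$ and using that $\exp\colon\mathfrak{h}\to T$ has kernel exactly $Q^\vee$ when $G$ is simply connected, the condition $w\cdot x=x$ becomes $w\xi-\xi\in Q^\vee$. Composing $w$ with the corresponding translation gives an element of the affine Weyl group $W_{aff}=\mathcal{W}\ltimes Q^\vee$ fixing $\xi$; since the alcove $A$ is a fundamental domain for $W_{aff}$, any such stabilizer is generated by the reflections in the walls of $A$ through $\xi$, and these descend to the reflections $s_a$ with $a(x)=1$. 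This is exactly where the hypothesis matters: for non-simply-connected $G$ the kernel of $\exp$ is strictly larger than $Q^\vee$, the extra lattice classes $\pi_1(G)=\ker(\exp)/Q^\vee$ produce stabilizing elements not of reflection type, and these are precisely the diagram automorphisms of $\tilde{D}(G)$ that generate the component group studied in the rest of the paper.

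Finally I would check that each reflection $s_a$ with $a(x)=1$ is realized by an element of $Z_G(x)^0$: the rank-one subgroup $G_a\cong SU(2)$ or $SO(3)$ attached to $a$ centralizes $x$ when $a(x)=1$, and $s_a$ lifts to $N_{G_a}(T)\subseteq Z_G(x)^0$. Hence $\overline n\in\mathcal{W}_x$ lifts to some $m\in Z_G(x)^0$ with $m^{-1}n\in T\subseteq Z_G(x)^0$, giving $n\in Z_G(x)^0$ and therefore $g=h^{-1}n\in Z_G(x)^0$, which completes the argument. The main obstacle is the reflection-generation of $\mathcal{W}_x$; the conjugacy of tori and the realization of individual reflections are standard once that structural fact is in hand.
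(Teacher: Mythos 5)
Your proof is correct. Note, however, that the paper does not prove this statement at all: it is quoted as Theorem 5 of Borel's paper \cite{B62} and used as a black box, so there is no internal argument to compare against. What you have written is the standard proof (essentially the one in Borel's original paper and in Borel--Friedman--Morgan): reduce to the Weyl-group stabilizer via conjugacy of maximal tori in $Z_G(x)^0$, identify $\mathcal{W}_x$ with the reflection subgroup generated by the $s_a$ with $a(x)=1$ using that $\ker(\exp)=Q^\vee$ exactly in the simply connected case, and realize each such reflection inside the rank-one subgroup $G_a\subseteq Z_G(x)^0$. All three ingredients are used correctly, and you correctly isolate simple connectivity as entering only through the identification $\ker(\exp)=Q^\vee$; the cokernel $\pi_1(G)=\ker(\exp)/Q^\vee$ is indeed what produces the non-reflection stabilizing elements (diagram automorphisms of $\tilde D(G)$) that govern $\pi_0(Z_G(x))$ in the non-simply-connected case, which is consistent with how the rest of the paper uses the result. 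The only point worth making explicit is the reduction allowing $\xi$ to be taken in the closed alcove $\overline{A}$ (replace $x$ by a $G$-conjugate, which does not affect connectedness of the centralizer), or alternatively invoke the fact that the stabilizer in $W_{aff}$ of \emph{any} point of $\mathfrak{h}$ is generated by the affine reflections fixing that point; either patch is standard and does not constitute a gap.
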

The following demonstrates the relationship between the conjugacy classes 
for commuting pairs $(x,y)$ and flat $G$-bundles over $T^2:$
\begin{proposition}
Assume that $G$ is a compact, connected and simply connected Lie group.
For any maximal torus $S$, we have that
$\{\rm{Hom}(\pi_1(T^2,x),G)/G\}\to (S\times S)/\mathcal{W}$
is a homeomorphism. 
\end{proposition}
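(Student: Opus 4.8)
The plan is to realize both sides as quotients and to produce an explicit bijection induced by the inclusion of the maximal torus. First I would record the identification $\mathrm{Hom}(\pi_1(T^2),G)\cong\{(x,y)\in G\times G:[x,y]=1\}$, valid because $\pi_1(T^2)\cong\z\times\z$ is free abelian on two generators, so a homomorphism is determined by the (necessarily commuting) images of those generators. Under this identification the $G$-action by postcomposition with conjugation becomes simultaneous conjugation $g\cdot(x,y)=(gxg^{-1},gyg^{-1})$, so the left-hand quotient is precisely the set of conjugacy classes of commuting pairs. Since $S$ is abelian, the inclusion $S\times S\hookrightarrow\{(x,y):[x,y]=1\}$ followed by the quotient map descends to a continuous, $\mathcal{W}$-invariant map $\iota\colon(S\times S)/\mathcal{W}\to\mathrm{Hom}(\pi_1(T^2),G)/G$, and I will show $\iota$ is a bijection; the homeomorphism asserted in the proposition is then its inverse.

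For surjectivity of $\iota$ I would show that every commuting pair is conjugate into $S\times S$. Given $(x,y)$ with $[x,y]=1$, conjugate so that $x\in S$; then $y\in Z_G(x)$. By Theorem \ref{Borel} the centralizer $Z_G(x)$ is connected, and since $S$ is a maximal torus of $G$ lying in $Z_G(x)$ it is a maximal torus of $Z_G(x)$ as well. As all maximal tori of the connected compact group $Z_G(x)$ are conjugate, some element of $Z_G(x)$ carries $y$ into $S$; this element fixes $x$, so the whole pair now lies in $S\times S$.

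The crux is injectivity, namely that two pairs in $S\times S$ that are $G$-conjugate are already $\mathcal{W}$-conjugate. Suppose $g(s_1,s_2)g^{-1}=(t_1,t_2)$ with all four entries in $S$. Then $t_1\in S\cap gSg^{-1}$, so both $S$ and $gSg^{-1}$ are maximal tori of the connected group $Z:=Z_G(t_1)$; hence some $z\in Z$ satisfies $z(gSg^{-1})z^{-1}=S$, so $n:=zg$ normalizes $S$ and defines a class $w\in\mathcal{W}$. Conjugating by $n$ gives $ns_1n^{-1}=zt_1z^{-1}=t_1$ (as $z$ centralizes $t_1$) and $ns_2n^{-1}=zt_2z^{-1}\in S$. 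Thus $w\cdot s_1=t_1$, but $w$ need only send $s_2$ to $zt_2z^{-1}$: this is the main obstacle, that forcing the first coordinate into place can move the second. To repair it I would note that $t_2$ and $zt_2z^{-1}$ are two elements of the maximal torus $S$ conjugate inside the connected group $Z$ (via $z$), so by Weyl conjugacy in $Z$ there is $n'\in N_Z(S)$ with $n'(zt_2z^{-1})(n')^{-1}=t_2$; its class $w'$ lies in the stabilizer $\mathcal{W}_{t_1}\cong\mathcal{W}(Z,S)$ and fixes $t_1$. Then $w'w$ sends $(s_1,s_2)$ to $(t_1,t_2)$, proving injectivity.

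Finally I would close the topological argument. The set $\mathrm{Hom}(\pi_1(T^2),G)$ is a closed, hence compact, subset of $G\times G$, and $(S\times S)/\mathcal{W}$ is a quotient of a compact space by the finite group $\mathcal{W}$, so both sides are compact Hausdorff. The map $\iota$ is a continuous bijection between compact Hausdorff spaces and is therefore a homeomorphism; its inverse is the asserted homeomorphism $\mathrm{Hom}(\pi_1(T^2),G)/G\to(S\times S)/\mathcal{W}$. The essential difficulty is the injectivity step, and it is exactly there that the simple connectivity of $G$ enters, through Theorem \ref{Borel} guaranteeing that the relevant centralizers are connected.
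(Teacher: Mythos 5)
Your proof is correct, and on the central step---conjugating a commuting pair into $S\times S$---it rests on the same ingredient as the paper's argument, namely Borel's theorem that $Z_G(x)$ is connected when $G$ is simply connected, so that $S$ is a maximal torus of the connected group $Z_G(x)$ and the second element can be moved into $S$ by an element fixing the first. Where you genuinely diverge is in the two remaining steps, which the paper either garbles or omits. First, you isolate injectivity as a separate claim: that two pairs in $S\times S$ which are $G$-conjugate are already $\mathcal{W}$-conjugate. Your two-stage correction---normalizing $S$ inside $Z_G(t_1)$ to put the first coordinate in place, then using Weyl conjugacy inside the connected group $Z_G(t_1)$ (whose Weyl group sits in the stabilizer of $t_1$ in $\mathcal{W}$) to repair the second coordinate without moving the first---is precisely the point that the paper's remark about ``projecting $y$ to an element $\zeta_y\in\mathcal{W}$'' gestures at but never establishes; note that connectedness of centralizers is used here a second time, and this is where simple connectivity is truly indispensable (for non-simply-connected $G$ the statement fails exactly because this step breaks). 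Second, you supply the topological upgrade from continuous bijection to homeomorphism via compactness, which the paper does not address at all; the one point worth making explicit there is that $\mathrm{Hom}(\pi_1(T^2),G)/G$ is Hausdorff because it is the quotient of a compact Hausdorff space by the action of the compact group $G$, so that orbits are closed. In short, your argument is a complete and correct version of what the paper's proof intends, with the injectivity and topology handled more carefully than in the source.
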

\begin{proof}
Fix generators $(\gamma_x,\gamma_y)$ for $\pi_1(T^2,x).$ 
Notice that we have a representation $\rho\colon\pi_1(T^2)\to G$ where
$\rho(\gamma_x)=x,\ \rho(\gamma_y)=y$ and that these images define the 
commutator in $G.$ In fact, the representation determines the commutator
in the following sense. Let $T$ be the maximal torus in $G.$ Then for 
some $g\in G,\ gxg^{-1}\in T$ and $gyg^{-1}\in T$ since every element 
in $G$ can be conjugated into the maximal torus. We 
want to show that both $x,y\in T.$ To do this, define conjugation by 
$g\in G$ for the pair $(x,y)$ by 
$g(x,y)g^{-1}=(gxg^{-1},gyg^{-1})=(x',y')$
where $(x,y)\in T\times T$ and $(x',y')\in T'\times T'.$  The fact that
$G$ is simply connected implies that $Z_G(x)$ is connected 
(~\ref{Borel}). Thus we may restrict to the connected component 
of the identity $Z^0(x).$ Since 
$x\in Z_G(x)$ we must show that $T\subseteq Z_G(x)$ because this
would imply that both $x,y\in T.$ By definition of the 
representation, the image  $[x,y]=1$ so that $y\in T$ is conjugate to 
$x$ which implies we may project $y$ to an element 
$\zeta_y\in\mathcal{W}.$ 
If we conjugate the pair $(x,y)$ by 
$(\zeta_y^{-1}g)\in G,$ then $(\zeta_y^{-1}g)(x,y)(\zeta_y^{-1}g)^{-1}=
\zeta_y^{-1}(x',y')\zeta\in T'\times T'.$ Thus we have shown that 
$(\zeta_y^{-1}g)$ conjugates elements from $T\times T$ to $T'\times T'$
and $\zeta_y^{-1}g\in\mathcal{W}.$ 

To prove the converse, notice that $\mathcal{W}$ acts by simultaneous 
conjugation
on $S\times S$ so that if $g\in \mathcal{W}$ is a reflection, then $gS\in 
N_G(S)/S.$
Thus $\mathcal{W}\times S\to S$ by $(gS,t)\mapsto gtg^{-1}$ and thus we 
have its
action on the pair $\mathcal{W}\times (S\times S)\to S\times S$ by 
$(gS,t,h)\mapsto (gtg^{-1},ghg^{-1}).$ Define the commutator by 
$[t,h]=[gtg^{-1},ghg^{-1}].$ Since elements in $S$ all commute,
if $\langle \gamma_x,\gamma_y\rangle$ generates $\pi_1(T^2)$ and 
$\rho(\gamma_x,\gamma_y)=[gxg^{-1},gyg^{-1}]=1$ then the holonomy 
determines the commutator and vice versa. 
\end{proof}

The next corollary follows immediately because the fundamental group of the centralizer $Z(x_1)$
is trivial, and for a commuting $n$-tuple in a simply connected group,
the component group is contained in the fundamental group of the semisimple subgroup 
$\pi_1(DZ(x_1)).$
\begin{corollary}
When $G$ is of type $A_n,\ C_n$ every commuting $n$-tuple can be
conjugated into the maximal torus $T$ in $G$ so that the moduli
space has the form $\mathfrak{M}_G = 
\underset{n}{\underbrace{T\times\cdots\times T}}/\mathcal{W}.$
\end{corollary}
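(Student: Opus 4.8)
The plan is to argue by induction on $n$, the length of the commuting tuple, taking as the statement $P(n)$: \emph{for every simply connected $G$ that is a product of simple factors of type $A$ or $C$, every commuting $n$-tuple lies in a common maximal torus}. The base case $P(1)$ is the standard fact that every element of a compact connected group lies in a maximal torus. The engine of the induction is Borel's theorem~(\ref{Borel}): since $G$ is simply connected, $Z_G(x_1)$ is connected. The special feature of types $A_n$ and $C_n$ that lets the induction close is that the derived subgroup $DZ_G(x_1)$ is again simply connected and is itself a product of simple factors of type $A$ or $C$; this is exactly the vanishing of $\pi_1(DZ_G(x_1))$, which by the remark preceding the corollary forces the component group of the centralizer of the tuple to vanish. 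Concretely, in $G=SU(m)$ the centralizer of any element is $S\!\left(U(k_1)\times\cdots\times U(k_r)\right)$ with derived subgroup $SU(k_1)\times\cdots\times SU(k_r)$, and in $G=Sp(m)$ the centralizer is $Sp(k_0)\times U(k_1)\times\cdots\times U(k_r)$ with derived subgroup $Sp(k_0)\times SU(k_1)\times\cdots\times SU(k_r)$; in both cases every factor is simply connected of type $A$ or $C$.

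For the inductive step, let $(x_1,\dots,x_n)$ be a commuting tuple and set $H:=Z_G(x_1)$, connected by~(\ref{Borel}). The elements $x_2,\dots,x_n$ lie in $H$ and commute pairwise. Writing $H=T_0\cdot DH$ with $T_0=Z(H)^0$ the central torus, I would decompose each $x_i=t_ih_i$ with $t_i\in T_0$ and $h_i\in DH$; this is well defined modulo the finite group $T_0\cap DH\subseteq Z(DH)$, and because $T_0$ is central the relation $[x_i,x_j]=1$ descends to $[h_i,h_j]=1$. Thus $(h_2,\dots,h_n)$ is a commuting $(n-1)$-tuple in the simply connected group $DH$. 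Applying $P(n-1)$ to $DH$ (coordinate-wise across its simple factors, each of type $A$ or $C$) produces $g\in DH$ conjugating $(h_2,\dots,h_n)$ into a maximal torus $T_H$ of $DH$. Then $S:=T_0\cdot T_H$ is a maximal torus of $H$, hence of $G$ since $\operatorname{rank}H=\operatorname{rank}G$. As $g\in H$ fixes $x_1$ and commutes with $T_0$, the tuple $(x_1,gx_2g^{-1},\dots,gx_ng^{-1})$ lies in $S$, using that the central element $x_1\in Z(H)$ lies in every maximal torus of the connected group $H$. This conjugates the original tuple into $S$ and establishes $P(n)$.

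Toral-ity then yields the moduli-space statement. Every commuting $n$-tuple is $G$-conjugate into $T\times\cdots\times T$, and two such tuples are $G$-conjugate precisely when they differ by the residual simultaneous action of $N_G(T)/T=\mathcal{W}$, by the same computation used in the Proposition extended from pairs to $n$-tuples. Hence the conjugacy classes of commuting $n$-tuples, equivalently $\{\mathrm{Hom}(\mathbb{Z}^n,G)\}/G$, are parametrized by $(T\times\cdots\times T)/\mathcal{W}$, giving $\mathfrak{M}_G=\underset{n}{\underbrace{T\times\cdots\times T}}/\mathcal{W}$.

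I expect the single genuine obstacle to be the structural input of the first paragraph: that $DZ_G(x)$ is simply connected for types $A_n$ and $C_n$. This is exactly where the hypothesis is used and cannot be dropped. For $B_n$, $D_n$, $G_2$, and the remaining types the semisimple part of some element centralizer has nontrivial fundamental group, and then $DZ_G(x)$ already contains a commuting pair not conjugate into its maximal torus, producing a commuting triple in $G$ that is not toral. I would justify the input either through the explicit centralizer descriptions above, or uniformly via Borel--de Siebenthal: the centralizer's root system is cut out by a subset of nodes of the extended Dynkin diagram $\tilde D(G)$, and for $A_n$ and $C_n$ every such pseudo-Levi subsystem has coroot lattice saturated in the coweight lattice, which forces its derived group to be simply connected.
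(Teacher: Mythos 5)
Your proposal is correct, and its engine is the same one the paper invokes: the triviality of $\pi_1(DZ_G(x_1))$ for types $A_n$ and $C_n$, combined with Borel's connectedness theorem. The difference is in how much of the argument is actually carried out. The paper asserts that the corollary ``follows immediately'' from the containment of the component group in $\pi_1(DZ(x_1))$, and then offers a second, sketchier route via $c$-pairs and the cyclicity of the center of $A_n$ and $C_n$; you instead run a genuine induction on the length of the tuple, over the class of simply connected products of type $A$/$C$ factors, using the decomposition $Z_G(x_1)=T_0\cdot DZ_G(x_1)$, the centrality of $x_1$ in its own centralizer (so that it lies in every maximal torus of the connected group $Z_G(x_1)$), and the explicit centralizer types $S(U(k_1)\times\cdots\times U(k_r))$ and $Sp(k_+)\times Sp(k_-)\times U(k_1)\times\cdots\times U(k_r)$ to verify that the inductive class is closed under passing to derived groups of centralizers. (Minor point: in $Sp(m)$ the eigenvalue $-1$ contributes a second symplectic factor, which you omitted; it is also simply connected of type $C$, so nothing breaks.) What your version buys is a complete proof that works for tuples of arbitrary length and makes visible exactly where the hypothesis on the type enters; what the paper's version buys is brevity and a link to the $c$-pair formalism used elsewhere in the text. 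Your closing reduction of the moduli space to $(T\times\cdots\times T)/\mathcal{W}$ via conjugating maximal tori inside $Z_G(\ov{x})$ is the standard completion and matches the role of the paper's preceding Proposition.
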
 
The corollary can also be seen directly as follows. If 
$(x_1,\ldots,x_n)$ is a commuting $n$-tuple such that 
$[\tilde{x_1},\tilde{x_2}]=c\in \mathcal{C}G$ and 
$[\tilde{x_1},\tilde{x_i}]=1,\ \forall\  2<i\le n,$ choosing 
$\tilde{x_1}$ in the alcove over the torus implies that 
$\tilde{x_2}$ projects to a Weyl element and therefore conjugates 
back into the maximal torus; every other element has trivial 
commutator and thus can be conjugated to the maximal torus.
This also works when $(x_i,x_j)$ 
for $1\le i<j\le n$ is an arbritrary $n$-tuple because the lifts 
$[\tilde{x_i},\tilde{x_j}]=c_{ij}\in\mathcal{C}G$ and for the cases
of type $A_n,\ C_n$ the center is generated by one cyclic element.
Hence only one pair in the $n$-tuple determines what happens to the
other elements in the $n$-tuple. 

\begin{corollary}
When $G$ is simply connected, the component group $\pi_0(Z(S))$ is a 
subgroup of $\z/{n_i}\z$ where $n_i\le 6$
and corresponds to the coroot integer for $x_1\in G$ which is 
associated to the node in the extended Dynkin diagram $\tilde{D}(G).$
\end{corollary}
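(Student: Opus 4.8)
The plan is to reduce the statement to a computation of $\pi_1(DZ_G(x_1))$ and then to read that group off the extended Dynkin diagram $\tilde{D}(G)$. By Theorem~\ref{Borel} the group $Z_G(x_1)$ is connected, and, as recorded in the discussion preceding the previous corollary, the component group $\pi_0(Z(S))$ of the centralizer of the commuting tuple injects into the fundamental group $\pi_1(DZ_G(x_1))$ of the semisimple part of $Z_G(x_1)$. It therefore suffices to show that this fundamental group is cyclic of order equal to the coroot integer $n_i$ of the node of $\tilde{D}(G)$ attached to $x_1$, and that $n_i\le 6$.

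First I would identify $DZ_G(x_1)$ combinatorially. Conjugating $x_1$ into $T$, it is represented by a point of the alcove $A=\mathfrak{h}/W_{aff}(\Phi)$; taking this point to be the vertex of $A$ cut out by every wall except the one labelled $i$, the Lie algebra of $Z_G(x_1)$ is $\mathfrak{h}$ together with the root spaces of the roots vanishing there. These are exactly the roots spanned by the nodes of $\tilde{\Delta}$ other than the $i$-th, so the root system of $DZ_G(x_1)$ is the subsystem whose diagram is $\tilde{D}(G)$ with node $i$ deleted, and its coroot lattice is $Q^\vee_i=\z\,\tilde{a}^\vee+\sum_{j\neq i}\z\,a_j^\vee$, where $\tilde{a}^\vee=-d^\vee$ is the coroot of the affine node.

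Next I would carry out the lattice computation. Because $G$ is simply connected, $T=\mathfrak{h}/Q^\vee$ with $Q^\vee=\sum_j\z\,a_j^\vee$, so $\pi_1(DZ_G(x_1))\cong Q^\vee/Q^\vee_i$. The single relation $\tilde{a}^\vee+\sum_{j=1}^n n_j\,a_j^\vee=0$ among the extended coroots, in which the affine node carries coroot integer $1$, shows that modulo $Q^\vee_i$ every generator other than $a_i^\vee$ dies and the relation collapses to $n_i\,a_i^\vee\equiv 0$; a short index computation gives $[Q^\vee:Q^\vee_i]=n_i$, whence $\pi_1(DZ_G(x_1))\cong\z/n_i\z$. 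Composing with the injection of the first paragraph yields $\pi_0(Z(S))\hookrightarrow\z/n_i\z$.

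Finally I would establish the uniform bound by inspecting the coroot integers of the extended diagrams type by type; their maximum over all reduced irreducible $\Phi$ is $6$, attained at the central node of $\tilde{D}(E_8)$. The hard part will be the bookkeeping rather than any single deep step: one must keep the coroot integers $n_j$ distinct from the ordinary root marks in the non-simply-laced types, verify that $Q^\vee/Q^\vee_i$ stays cyclic even when deleting node $i$ disconnects $\tilde{D}(G)$ into several simple factors (as at the central node of $\tilde{D}(D_4)$, where the four $A_1$ factors still give $\z/2\z$), and confirm that the largest component group is already realized at a vertex, so that the single node associated to $x_1$ governs the bound.
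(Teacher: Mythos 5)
Your plan follows essentially the same route as the paper: reduce $\pi_0$ of the centralizer to the fundamental group $\pi_1(DZ_G(x_1))$ via Borel's connectedness theorem and the stabilizer identification from \cite{BFM02}, then realize that fundamental group as a quotient of coroot lattices read off the extended Dynkin diagram and bound it by $6$ by inspecting the coroot integers. Your explicit index computation $[Q^\vee:Q^\vee_i]=n_i$ from the single relation among the extended coroots is just the one-deleted-node instance of the paper's subsequent proposition (which handles several deleted nodes via a gcd), so the argument is correct and not materially different.
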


By [Lemma 3.1.5 in \cite{BFM02}], for $x_n\in Z(x_1,\ldots,x_{n-1}),\   
{\rm Stab}_{W(\Phi(x_1,\ldots,x_{n-1}))}(x_n)\cong
{\rm Stab}_{\pi_1(DZ(x_1,\ldots,x_{n-1}))}(\tilde{x_n})).$
Thus the component group is a subgroup of the fundamental group
$\pi_1(Z(x_1,\ldots,x_{n-1})),$ which in turn is a finite subgroup
of the center of $\tilde{Z}(x_1,\ldots,x_{n-1}).$ If the
fundamental group of the centralizer $Z(x_1)$ is trivial, then
there are no further component groups for the remaining elements
in the $n$-tuple. So suppose that $\pi_1(DZ(x_1))\ne \{1\}.$
If $G$ is simply connected, then  $Z(x)$ is
connected and thus $\pi_0(Z(x_1,x_2))\subseteq \pi_1(DZ(x_1)).$
Even if $G$ is not simply connected but still connected, choosing
$x_2$ to lie in the connected component of the centralizer of 
$Z(x_1)$ will yield the same result.

If $\pi_1(DZ(x_1))$ is not central then it defines a possibly 
non-trivial diagonal subgroup $\mathcal{C}\tilde{Z}(x_1)$ which acts 
simultaneously on the components of the centralizer $Z(x_1).$  
If $x_2$ is chosen so that $\tilde{x_2}\in A$ does not lie in the
fixed point space under this diagonal action, the component
group is trivial. However, if $\tilde{x_2}$ lies somewhere in the
fixed space under the action of the center, there will be a
nontrivial component group $\pi_0(Z(x_1,x_2)).$ The structure of
the centralizer itself will differ depending on where the element
$x_2$ lies. Regardless of which compact, connected Lie group $G$
we are working with, whether or not there is a component group   
from the choice of second element relies on the diagonal action 
on $Z(x_1,x_2).$

\begin{proposition}
Let $G$ be simply connected and let $\Delta=\{a_1,\ldots,a_n\}$ be
a choice of simple roots. Let $\x=\{\tilde{a},a_1,\ldots,a_k\}, k\le n,$
be a choice of simple roots for $Z_G(x)$ and let  
$\mathfrak{h}(x)\subseteq\mathfrak{h}$ be the real linear
span of the coroots dual to the roots in $\x.$ Then there  
is an exact sequence $1\to Q^\vee(x)\to Q^\vee\cap\mathfrak{h}(x)\to\z/n_i\z \to 1.$
\end{proposition}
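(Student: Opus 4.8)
The plan is to read the three lattices off the geometry of $Z_G(x)$ and then reduce the whole statement to one index computation inside $Q^\vee$ that is controlled by the coroot form of the extended diagram relation. First I would record that, since $G$ is simply connected, $Z_G(x)$ is connected by Theorem~\ref{Borel}, and it is reductive because $G$ is compact. Let $H:=DZ(x)$ be its derived group; $H$ is semisimple, its maximal torus $T_H$ sits inside the maximal torus $T$ of $G$ with $\mathrm{Lie}(T_H)=\mathfrak h(x)$, and $\x$ is a base of the root system of $H$. As $G$ is simply connected the exponential map identifies $T$ with $\mathfrak h/Q^\vee$, so the cocharacter lattice of $T_H$ is $Q^\vee\cap\mathfrak h(x)$, while the coroot lattice of $H$ is $Q^\vee(x)=\sum_{a\in\x}\z a^\vee$. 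The standard presentation of the fundamental group of a semisimple group as cocharacters modulo coroots then gives $\pi_1(H)\cong(Q^\vee\cap\mathfrak h(x))/Q^\vee(x)$, which is exactly the cokernel in the asserted sequence; the inclusion $Q^\vee(x)\subseteq Q^\vee\cap\mathfrak h(x)$ is clear since each $a_i^\vee$ and $\tilde a^\vee=-d^\vee$ lies in both $Q^\vee$ and $\mathfrak h(x)$. It then remains to compute this finite quotient and identify its order with $n_i$.

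For the computation I would use the coroot relation coming from $\tilde D(G)$. Writing $d=\sum_{i=1}^n m_i a_i$ for the highest root, its coroot is $d^\vee=\sum_{i=1}^n m_i^\vee a_i^\vee$, where the $m_i^\vee$ are the coroot integers labelling the nodes of the extended diagram; since $\tilde a=-d$ this gives $\tilde a^\vee=-\sum_{i=1}^n m_i^\vee a_i^\vee$. Writing $w=\sum_{i=k+1}^n m_i^\vee a_i^\vee$ for the contribution of the deleted nodes, this says $\tilde a^\vee\equiv -w$ modulo $\sum_{i\le k}\z a_i^\vee$. A vector $\sum_{i=1}^n c_i a_i^\vee\in Q^\vee$ lies in $\mathfrak h(x)$ precisely when its components along the deleted directions $a_{k+1}^\vee,\dots,a_n^\vee$ are proportional to $(m_{k+1}^\vee,\dots,m_n^\vee)$, and the primitive integral vector of that shape is $\tfrac1g w$ with $g=\gcd(m_{k+1}^\vee,\dots,m_n^\vee)$. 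Hence $Q^\vee\cap\mathfrak h(x)=\bigl(\sum_{i\le k}\z a_i^\vee\bigr)\oplus\z\cdot\tfrac1g w$.

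To conclude I would project onto the $w$-coordinate: the map $Q^\vee\cap\mathfrak h(x)\to\z$ sending $\tfrac1g w\mapsto1$ and each $a_i^\vee$ with $i\le k$ to $0$ is surjective with kernel $\sum_{i\le k}\z a_i^\vee\subseteq Q^\vee(x)$, and it sends $\tilde a^\vee\equiv -w$ to $-g$. Therefore the image of $Q^\vee(x)$ is $g\z$ and $(Q^\vee\cap\mathfrak h(x))/Q^\vee(x)\cong\z/g\z$ is cyclic. When $x$ is chosen so that $Z_G(x)$ is semisimple, exactly one node $a_i$ is removed from $\tilde D(G)$, so $g$ reduces to the single coroot integer $m_i^\vee=n_i$ and the sequence $1\to Q^\vee(x)\to Q^\vee\cap\mathfrak h(x)\to\z/n_i\z\to1$ is exact as stated; in general the order is the greatest common divisor of the coroot integers of the removed nodes.

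I expect the real obstacle to lie in the first step rather than the arithmetic. One must justify carefully that the saturation $Q^\vee\cap\mathfrak h(x)$ is genuinely the cocharacter lattice of $T_H$ and not merely some lattice sandwiched between $Q^\vee(x)$ and $Q^\vee$, and that $\x$ is a true base of $\Phi(x)$ so that $Q^\vee(x)$ is its coroot lattice; both facts rest on the Borel--de Siebenthal identification of $\x$ with a subdiagram of $\tilde D(G)$ together with the hypothesis that $G$ is simply connected. Once those identifications are secured, the single affine relation $\tilde a^\vee=-\sum_i m_i^\vee a_i^\vee$ forces the quotient to be cyclic of the asserted order, so no further case analysis on the type of $G$ is needed.
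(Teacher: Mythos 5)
Your proposal is correct and follows essentially the same route as the paper: both identify the quotient $(Q^\vee\cap\mathfrak h(x))/Q^\vee(x)$ with the fundamental group of $DZ(x)$ and exhibit the generator $\tfrac1g\sum_{r>k}g_r\,a_r^\vee$ of order $g=\gcd(g_{k+1},\ldots,g_n)=n_i$, which is exactly the element $\sum_{r>k}g_r'\,a_r^\vee$ the paper constructs. The only difference is one of rigor: you derive the description of $Q^\vee\cap\mathfrak h(x)$ and the cyclicity of the quotient from the affine relation $\tilde a^\vee=-\sum_i m_i^\vee a_i^\vee$, whereas the paper asserts $n_i=\gcd(g_{k+1},\ldots,g_n)$ ``by definition of the fundamental group'' and leaves those identifications implicit.
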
  

\begin{proof}
By definition of the fundamental group,
$n_i={\rm gcd}(g_{k+1},\ldots,g_n)$ knowing that
all the coroot integers for both the classical and exceptional
groups are less than or equal to six, $n_i\le 6.$ Dividing each of
the coroot integers in any group $G$ by $n_i$ we may define a new
integer $g_r'=g_r/{n_i}$ for $r > k.$ By definition, this element
will have order $n_i$ in the central subgroup
$Q^\vee\cap\mathfrak{h}/{Q^\vee(x)}$ and thus is a generator for the
cokernel. Hence
$Q^\vee\cap\mathfrak{h}/{Q^\vee(x)}\cong\z/n_i\z.$
\end{proof}

\begin{proposition}
For an arbitrary compact, connected simple group $G$ and for a 
commuting $n$-tuple $\ov{x}=(x_1,\ldots,x_n),$ the component group of 
the centralizer of the $n$-tuple can be defined in terms of the
roots as 
$$\pi_0(Z_G(\ov{x}))=\frac{\rm{Stab}_{L/{Q^\vee}}(\ov{x})}
{\mathcal{W}(\Phi(\ov{x}))}.$$ 
\end{proposition}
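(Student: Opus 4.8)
The plan is to compute $\pi_0(Z_G(\ov{x}))$ by the \emph{normalizer-modulo-centralizer} method and then translate the resulting Weyl-group stabilizer into the lattice language of the alcove. First I would fix a maximal torus $T\subseteq G$, write $T=\mathfrak{h}/L$ with $L$ the integral lattice and $Q^\vee\subseteq L$, and—in the toral case—conjugate so that every $x_i$ lies in $T$, recording $\ov{x}$ as a point $\ov{\xi}=(\xi_1,\ldots,\xi_n)$ of $\mathfrak{h}^n$ defined modulo $L$ in each coordinate. The identity component $Z_G(\ov{x})^0$ is the connected reductive subgroup containing $T$ with root system
$$\Phi(\ov{x})=\{\,\alpha\in\Phi : \alpha(x_i)=1\ \text{for all}\ i\,\},$$
so that $\bigl(Z_G(\ov{x})^0\cap N_G(T)\bigr)/T=\mathcal{W}(\Phi(\ov{x}))$.

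Since every connected component of a compact group meets the normalizer of a maximal torus of the identity component, restriction gives a surjection $Z_G(\ov{x})\cap N_G(T)\to\pi_0(Z_G(\ov{x}))$, whence
$$\pi_0(Z_G(\ov{x}))\cong\frac{\bigl(Z_G(\ov{x})\cap N_G(T)\bigr)/T}{\mathcal{W}(\Phi(\ov{x}))}.$$
An element of $N_G(T)/T=\mathcal{W}$ centralizes $\ov{x}$ exactly when it fixes each $x_i$ under the $\mathcal{W}$-action on $T$, so the numerator is the pointwise stabilizer of $\ov{x}$; this is the group the statement denotes $\mathrm{Stab}_{L/Q^\vee}(\ov{x})$, and the formula will follow once that identification is justified.

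The crux—and the step I expect to be hardest—is to realize this torus stabilizer through the lattice $L/Q^\vee$, i.e.\ to pass from the linear $\mathcal{W}$-action on $T$ to the affine picture on the alcove. The key is that $w\in\mathcal{W}$ fixes $x_i=\exp(\xi_i)$ iff $w\xi_i\equiv\xi_i\pmod{L}$, which says precisely that $w$ lifts to a transformation of the extended affine Weyl group $\widetilde{W}=L\rtimes\mathcal{W}$ fixing $\ov{\xi}$; modding out the affine Weyl group $W_{aff}=Q^\vee\rtimes\mathcal{W}$ exhibits the relevant symmetries inside $L/Q^\vee$ acting on the alcove $A$ as diagram automorphisms of $\tilde{D}(G)$. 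Because the reflections of $W_{aff}$ fixing a point of the closed fundamental alcove generate exactly $\mathcal{W}(\Phi(\ov{x}))$, this denominator is precisely the $W_{aff}$-part that must be quotiented out, and assembling the two layers produces the displayed formula.

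Two points will require genuine care. The first is the reduction to $T$: for $n\ge 3$ a commuting tuple in $G$ need not be toral, so to cover \emph{arbitrary} tuples I would pass to the simply connected cover $\tilde{G}$, invoke Theorem~\ref{Borel} together with the iterated-centralizer description (the cited [Lemma 3.1.5, BFM02]) to work inside the successive centralizers $Z(x_1),Z(x_1,x_2),\ldots$, and verify that the lattices $L$ and $Q^\vee$ are correctly replaced by the corresponding coroot and integral lattices of those subgroups at each stage. The second is keeping track of which classes in $L/Q^\vee$ genuinely fix $\ov{\xi}$ rather than merely permuting the walls of $A$; this is exactly where the exact sequence of the preceding proposition, and the diagram-automorphism interpretation of the center, do the essential bookkeeping.
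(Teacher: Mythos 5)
Your proposal is correct in outline and lands on the same underlying objects as the paper --- the root subsystem $\Phi(\ov{x})$, the lattice $L$ with $Q^\vee\subseteq L\subseteq P^\vee$, and the group $L/Q^\vee$ acting on the alcove as diagram automorphisms --- but it gets there by a genuinely different route. You anchor everything on the normalizer-modulo-centralizer computation: for a toral tuple, $\pi_0(Z_G(\ov{x}))\cong \bigl((Z_G(\ov{x})\cap N_G(T))/T\bigr)/\mathcal{W}(\Phi(\ov{x}))=\mathrm{Stab}_{\mathcal{W}}(\ov{x})/\mathcal{W}(\Phi(\ov{x}))$, and then translate the linear stabilizer into the affine picture via $\widetilde{W}=L\rtimes\mathcal{W}$ versus $W_{aff}=Q^\vee\rtimes\mathcal{W}$. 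The paper never invokes $N_G(T)$; instead it works with the quotient of the extended affine Weyl group by $Q^\vee$, uses that $\mathcal{W}$ acts trivially on $L/Q^\vee$ to split $\mathcal{W}^L_{aff}$ as a direct product $L/Q^\vee\times\mathcal{W}$, and defines $\mathrm{Stab}_{L/Q^\vee}(\ov{x})$ outright as the stabilizer of the lift $\tilde{x}$ in the alcove. Your version buys transparency: the two quotient layers are visibly the two factors of a group extension, and it is clear exactly where the hypothesis ``$\ov{x}$ can be conjugated into $T$'' enters. What the paper's version buys --- and what remains the thinnest part of your sketch --- is the meaning of $L$ for an actual $n$-tuple: the paper constructs $L=L_{n-1}$ inductively through the chain $Z(x_1)\supseteq Z(x_1,x_2)\supseteq\cdots$, with $x_{i+1}$ chosen in $\mathfrak{h}(x_1,\ldots,x_i)/L_i$ and $Q^\vee\subseteq L_1\subseteq\cdots\subseteq L_{n-1}\subseteq P^\vee$, which is precisely the iterated-centralizer bookkeeping you defer to ``genuine care'' in your final paragraph. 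If you carry out that induction (replacing $T$, $Q^\vee$, and $L$ at each stage by their analogues in $DZ(x_1,\ldots,x_i)$, with connectedness controlled by Theorem~\ref{Borel} and the cited Lemma 3.1.5 of \cite{BFM02}), your argument closes; without it, the reduction to the toral case for $n\ge 3$ is asserted rather than proved.
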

\begin{proof}
If $G$ is not simply connected, then under complexification 
$T_{\c}=\mathfrak{h}/L$, where $Q^\vee\subseteq L\subseteq P^\vee.$
If $\Phi(x)$ is a subset of roots which annihilate $x$ we must 
determine how $\rm{Stab}_{\mathcal{W}}(x)$ is defined with respect
to this smaller subset of roots. If $x$ corresponds to some node $a_i$
in the extended Dynkin diagram such that $g_{a_i}\ne 1$ then 
$\Phi(x)=\{a_k\in\tilde{\Delta}\mid k\ne 1\}.$ Since 
$Q^\vee\subseteq L\subseteq P^\vee$ we have the nesting of tori
$\mathfrak{h}/{P^\vee}\subseteq \mathfrak{h}/L \subseteq 
\mathfrak{h}/{Q^\vee}.$ Thus for the lift 
$\mathfrak{h}/L\to \mathfrak{h}/{Q^\vee}$ sending $x\mapsto \tilde{x}$ 
its kernel consists of all the roots in $L$ not in $Q^\vee$ i.e. 
$L/{Q^\vee}.$ Therefore the roots which annihilate $x$ are the same
as those annihilating $\tilde{x}.$ 

Let $\mathcal{W}(\Phi(x))$ be a subgroup in $\mathcal{W}$ defined 
by a subroot system when viewed as characters which annihilate $x.$ 
The faithful action of $\mathcal{W}_{aff}$ on $\mathfrak{h}/{Q^\vee}$
yields a split exact sequence 
$1\to P^\vee/{Q^\vee}\to \mathcal{W}_{aff}\to \mathcal{W}\to 1$
and since the kernel is central, 
$\mathcal{W}_{aff}=P^\vee/{Q^\vee}\times \mathcal{W}$ is a direct product
because the action of the Weyl group is trivial on the center. 
Restrict the Weyl group to $L\colon \ \mathcal{W}^L_{aff}=L/{Q^\vee}\times 
\mathcal{W}.$ The torus action of $L/{Q^\vee}$ on $\mathfrak{h}/{Q^\vee}$
provides the quotient 
$(\mathfrak{h}/{Q^\vee})/(L/{Q^\vee})=\mathfrak{h}/L.$  This implies that
$\rm{Stab}_{\mathcal{W}}(x)\subseteq L/{Q^\vee}\times\mathcal{W}$ and the 
projection $\pi\colon 
(\mathfrak{h}/{Q^\vee})\to (L/{Q^\vee})$ satisfies $\pi^{-1}(\tilde{x})=x$
as the unique lift to the alcove. Therefore we may define 
$\rm{Stab}_{L/{Q^\vee}}(x)=\rm{Stab}_{\mathcal{W}_{aff}}(\tilde{x})$ 
in the sense that the roots which annihilate $\tilde{x}$ can be used 
to define a subset $S\subseteq L/{Q^\vee},$ where
$S=\pi_0(Z_G(x_1,\ldots,x_n)).$ This allows for 
a component group larger than the fundamental group and therefore it is 
not necessarily cyclic. Since $S\subseteq \mathcal{C}G$ it induces a 
well-defined cyclic permutation on the vertices in the alcove and its
fixed space $\mathfrak{h}^S$ may be something other than the barycenter.

$L$ is defined as follows. For $Z_G(x_1)$
the vector space is $\mathfrak{h}$ and its coroot lattice is the entire
$Q^\vee.$ Because we are considering commuting elements, we choose 
$x_2\in Z_G(x_1)$ to lie in $\mathfrak{h}(x_1)/L_1$ where 
$\mathfrak{h}(x_1)$ is the vector space associated to $DZ(x_1).$ Because 
$Z(x_1)$ is not necessarily connected, the associated lattice is
$Q^\vee\subseteq L_1 \subseteq P^\vee.$ By induction,  the element 
$x_n\in \mathfrak{h}(x_1\ldots,x_{n-1})/L_{n-1}$ and 
$Q^\vee\subseteq L_1\subseteq\cdots\subseteq L_{n-1}\subseteq P^\vee$
so that $L=L_{n-1}$ as the associated lattice to the centralizer of the
prior $n-1$ elements. From the definition of these lattices,
when they are quotiented out by the coroot lattice, they will either
be a cyclic subgroup of the center whose order divides the order of the 
center or will be the entire center. Thus we have the above conclusion
since $S\subseteq L/{Q^\vee}\subseteq P^\vee/{Q^\vee}.$ 
\end{proof}

\section{Properties of Centralizers} 
In general, the component group of an ordered $n$-tuple is some subquotient of the Weyl group and lies in the 
connected component $Z^0(x_1,\ldots,x_n).$  
In order to determine the component group for a non-simply connected
group we note that the finite diagonal subgroup contained in the center of each 
centralizer $Z(x_1,\ldots, x_k),$ for some $k,$ at some point 
becomes the component group and therefore defines the singularities
in the moduli space. For the classical groups, $Z(x_1)$ will be a product of type
$A_n,\ B_n$ or $D_n$ and for the exceptional groups, $Z(x_1,x_2)$ will be
of type $A_n,\ D_n$ Therefore, it sufficies to consider
the diagonal group action of the fundamental group $\pi_1(DZ(x_1))$  
on groups of these types. Since the fundamental group is a subgroup 
of the center of the simply connected covering, for type $B_n$ we
only consider the $\z/2\z$ action on the alcove given by flipping two vertices;the action of any higher
order central cyclic group is trivial. 

\begin{D}
Define the {\bf rank}
$rk(x_1,\ldots,x_n)$ of an $n$-tuple to be the rank of $Z(x_1,\ldots,x_n).$ An $n$-tuple has
{\bf rank zero} if and only if  $Z(x_1,\ldots,x_n)$ is a finite group.
A $c$-pair $(x,y)$ is in {\bf normal form} with respect to the maximal torus
$T$ in the alcove $A$ if $x\in T$ is the image under the exponential map of 
$\tilde{x}\in \mathfrak{t}^c$ and $y\in N_G(T)$ projects to $w_c\in 
\mathcal{W}.$ Note $w_c\in W$ is the differential action of $c\in\mathcal{C}G$ 
that, as a group of affine isometries of the Lie algebra $t$ of the maximal torus 
$T$ normalizes the alcove $A.$
\end{D}

Let $\tilde{\Delta} =\{ \tilde{a},a_1,\ldots, a_n\}$ be the set of
extended simple roots for a Lie group $G.$ Any closed
subset of the extended simple roots for $G$ gives a subdiagram of the
extended diagram. We are interested in the subset of roots $\Delta_x$ that annihilate the $n$-tuple. 
In particular, the simple root system for the centralizer $Z(x)$ for any $x\in G$ is defined as
by $\x=\{a\in \Delta\mid \a(x)\in\z\}$ which has an associated Weyl group  $\mathcal{W}(\Phi(x)).$ Let $\ov{x}=(x_1,\ldots,x_n)$ be a commuting $n$-tuple. Any element $x\in \pi_0(Z(\ov{x}))$ can be represented by $g\in 
Z_G(\ov{x})$ since $g$ normalizes $Z_G(\ov{x})$ and therefore via conjugation defines a 
map $\{x\}\times I \to G$ defining the path components of $G$.
Let $S_{\x}$ be the maximal torus in the centralizer
$Z(x_1,\ldots,x_n)$ generated by the roots in $\Delta_x$ given by its Lie algebra
$\mathfrak{s}= \underset{a\in\x}{\bigcap}{\rm Ker}(a).$
Since $G$ is reductive, there is a standard decomposition given by
$G=(\mathcal{C}G)^0\times_F DG$ where $(\mathcal{C}G)^0$ is a central 
torus in the semisimple subgroup $DG$ of $G.$ and 
$F=(\mathcal{C}G)^0\cap DG$ is a finite subgroup of the center of $DG.$  
Thus we get a decomposition of the the centralizer into
$Z(S_{\x})= S_{\x}\times_{F_{\x}} DZ(S_{\x}).$ 

\begin{remark}
For $c\in\mathcal{C}G$, denote by $S_c$ the torus in $T$ fixed under the 
action of the center. 
The choice of roots $\Delta(c)=\{a\in \Delta\mid r_a\not\in {\bf Z}\}$ 
where $c=\rm{exp}(\lambda)$ for $\lambda=\underset{a\in\Delta}{\sum}
r_a a$ defining the fixed subtorus $S_c\subset T$ is independent of the 
choice of lift $\lambda,$ let $\lambda'\underset{a\in\Delta}{\sum}
r_a'a'\in\mathfrak{t}$ be such that $\rm{exp}(\lambda')=c'.$ 
Then under $\rm{exp}\colon\mathfrak{t}\to T,$ the kernel of 
this map is an integral lattice defined with respect to $T.$ Namely,
$\rm{Ker}(exp) =Q^\vee,$ thus for $\lambda-\lambda'\in \rm{Ker}(\pi)$
this implies that $r_a-r_a'\in {\bf Z}$ and hence $\Delta(c)=\Delta(c')$
if and only if $r_a-r_a'\equiv 0\pmod{\bf Z}$ which we have since 
$r_a-r_a'\in Q^\vee.$ Therefore, $\Delta(c)$ depends only on the choice
$c\in\mathcal{C}G\cong P^\vee/Q^\vee$ and any two elements in the Lie
algebra $\mathfrak{t}$ differ by an element in the coroot lattice 
$Q^\vee.$ By definition of a $c-$pair of rank zero, $c\in DZ(S_c).$ 
Thus the moduli space is precisely $\mathfrak{M}=(T\times T)/W(T,G).$  
It certainly will not be true that for a general non-simply connected 
group that every element of a commuting $n$-tuple can be put inside the 
maximal torus. 
\end{remark}

We show that the fundamental group of the centralizer is finite cyclic 
by using diagram automorphisms. 
\begin{proposition}\label{fundgp}
Under a cyclic permutation of the vertices in the extended diagram 
of the type $A_n$ where the permutation is given by the fundamental
group $\pi_1(DZ(x_1))=\z_k,$ the quotient space has the form 
$$A_n/{\mathbb Z}_k\cong \underset{k}{\times}A_{l-1}\times T^{k-1}\rtimes
{\mathbb Z}_k$$
where $n+1=kl$ and $\pi_1(DZ(x_1))\cong {\mathbb Z}_k,$ with $1\le k\le 6.$ 
\end{proposition}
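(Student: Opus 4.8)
The plan is to work on the extended $A_n$ diagram $\tilde{D}$, realized as the $(n+1)$-cycle with nodes labeled cyclically by $\mathbb{Z}/(n+1)\mathbb{Z}$, and to identify the permutation coming from $\pi_1(DZ(x_1)) \cong \mathbb{Z}_k$ explicitly. Since $DZ(x_1)$ is of type $A_n$, its fundamental group sits inside the center as $\mathbb{Z}_k \cong L/Q^\vee \subseteq P^\vee/Q^\vee \cong \mathcal{C}(SU(n+1)) \cong \mathbb{Z}/(n+1)\mathbb{Z}$, so that $k \mid (n+1)$ and we may write $n+1 = kl$; the bound $1 \le k \le 6$ is exactly the bound on coroot integers established in the earlier proposition. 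First I would check that the generator of $L/Q^\vee$ acts on the alcove as the diagram automorphism $w_c$, which on the $(n+1)$-cycle is the rotation $\sigma\colon i \mapsto i+l \pmod{n+1}$ by $l$ steps. Because $\sigma$ carries each of the $k$ consecutive blocks of $l$ nodes onto the next block, it has order $k$, fixes no node, and permutes the $k$ blocks cyclically.

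Second, I would pass to the torus $T = \mathfrak{h}/Q^\vee$ with $\mathfrak{h} = \{(\theta_1,\dots,\theta_{n+1}) : \sum_i \theta_i = 0\}$ in the standard coordinates for type $A_n$, and group the $n+1 = kl$ coordinates into $k$ blocks of length $l$. The traceless part of each block spans a copy of the $A_{l-1}$ Cartan, contributing a torus $T^{l-1}$, while the $k$ block-averages, subject to the single relation that their sum vanishes, span a complementary central subspace contributing $T^{k-1}$. This produces a $\mathbb{Z}_k$-equivariant real splitting $\mathfrak{h} \cong \bigl(\underset{k}{\oplus}\,\mathfrak{h}(A_{l-1})\bigr) \oplus \mathfrak{h}(T^{k-1})$, on which $\sigma$ acts by the block shift: it permutes the $k$ summands $\mathfrak{h}(A_{l-1})$ cyclically and acts on $\mathfrak{h}(T^{k-1})$ through the induced cyclic permutation of the block-averages.

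Third, I would descend this splitting to $T$ using the reductive decomposition $Z(S_{\x}) = S_{\x} \times_{F_{\x}} DZ(S_{\x})$: the central torus $S_{\x}$ is identified with the $T^{k-1}$ of block-averages, and $DZ(S_{\x})$ with the $k$ copies of $SU(l)$ whose maximal tori are the $A_{l-1}$-factors, glued along the finite central subgroup $F_{\x}$. Reassembling the $\sigma$-action from the previous step, the block permutation becomes the semidirect factor, and the quotient is presented as $A_n/\mathbb{Z}_k \cong \bigl(\underset{k}{\times} A_{l-1}\bigr) \times T^{k-1} \rtimes \mathbb{Z}_k$, with the $\mathbb{Z}_k$ cyclically permuting the $k$ isomorphic blocks.

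I expect the main obstacle to be this third step, namely descending the real splitting of $\mathfrak{h}$ to an honest statement about the compact torus. Over $\mathbb{R}$ the decomposition is a direct sum, but the coroot lattice $Q^\vee$ of $A_n$ does not split as the direct sum of the block coroot lattices and the central lattice; the discrepancy is precisely the finite gluing group $F_{\x} \subseteq \mathcal{C}G$, and one must verify that $F_{\x}$ is carried to itself by $\sigma$ (so that the semidirect product is well defined) and that it sits inside $L/Q^\vee$. This is where the nesting $Q^\vee \subseteq L \subseteq P^\vee$ and the centrality of $F$ in $G = (\mathcal{C}G)^0 \times_F DG$ do the work.
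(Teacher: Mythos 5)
Your proposal is correct and follows essentially the same route as the paper: both identify the $\mathbb{Z}_k$ as the rotation by $l$ steps of the extended $A_n$ diagram viewed as an $(n+1)$-cycle, observe that the orbit of $\tilde a$ is $\{\tilde a, a_l,\ldots,a_{(k-1)l}\}$ so that the complement breaks into $k$ gaps of type $A_{l-1}$, and assemble the quotient as $\bigl(\underset{k}{\times}A_{l-1}\bigr)\times T^{k-1}\rtimes\mathbb{Z}_k$. Your version is somewhat more explicit (standard coordinates on $\mathfrak{h}$, the block-average splitting, and the observation that $Q^\vee$ does not split so the gluing group $F_{\x}$ must be tracked), whereas the paper argues purely at the level of the diagram and does not address that lattice subtlety.
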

\begin{proof}
Since any inner automorphism of type $A_n$ is dihedral, it is either 
a rotation or a reflection. Consider the cyclic permutation $\tau$ given by 
rotation. (note: this is an element of a group of affine automorphisms
of a vector space which normalizes the alcove of a root system on that 
vector space. Such automorphisms are equivalent to diagram automorphisms
of the extended Dynkin diagram of the root system.)
If $\tau\in\z_k$ has order $n+1$ then the fixed point set
is simply the barycenter and thus $\mathfrak{t}^\tau=\{0\}.$ If $n$ 
is odd then $\tau$ may have order $k\mid(n+1).$ If $k=\frac{n+1}{2}$
then either the barycenter is the only fixed point or the fixed point
set is the join of type $A_{2k-1}$ or there is a rotation subgroup 
of $\tau$ of order exactly $k$ which implies it is an involution of 
the extended diagram which fixes two vertices and thus the quotient 
coroot diagram is a product of type $A_1.$ The $\z_2$ action on the 
alcove over $A_1$ is simply to switch the two vertices leaving the 
barycenter fixed. 

Specifically, if $n+1=kl$ then every node in the extended diagram included 
in this $k$-orbit is nonzero which leaves the quotient coroot diagram 
as the join of $k,\ (l-1)$-simplicies with the barycenter (since the 
barycenter is the only fixed space under the action of the full center)
times the remaining torus and semidirect product with rotation group. 
In terms of extended roots in the diagram, if 
$\tilde{\Delta}=\{\tilde{a},a_1,\ldots,a_n\}$ is the set of simple roots
for $A_n$ then the quotient space $A_n/\z_k$ is defined by the elements
in the orbit, 
$\tilde{\Delta}/{\z_k}=\{\tilde{a},a_l,a_{2l},\ldots,a_{(k-1)l}\}.$ 
Thus the gaps between the nodes are of length $(l-1).$ 
Therefore, the fixed space will be given by
$A_n/{\z_k}= (\underset{k}{\times}A_{l-1})\times T^{k-1}\rtimes\z_k.$
What we have shown is that in $A_n$ the 
$\rm{Stab}_{\tau}(\underset{k}{\times}A_{l-1})=\z_k.$ 
The fact that $\pi_1(DZ(x_1))\cong \z_k$ where $1\le k\le 6,$ follows 
directly from looking at the coroot integers for all the extended 
Dynkin diagrams. 
\end{proof}
 
\begin{proposition}
Let $G$ be a simple group of dimension $n.$ The centralizer 
$Z(A_k\times_F T^{n-k})= Z_{A_k}\times_F T^{n-k}.$
\end{proposition}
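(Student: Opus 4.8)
The plan is to reduce the computation of $Z_G(A_k\times_F T^{n-k})$ to two independent centralizer conditions. Since the subgroup $H=A_k\times_F T^{n-k}$ is generated by its semisimple factor $A_k$ together with its central torus $T^{n-k}$, an element of $G$ centralizes $H$ precisely when it centralizes each factor separately; that is, $Z_G(H)=Z_G(A_k)\cap Z_G(T^{n-k})$. I would establish this elementary reduction first, using that the amalgamating group $F\subseteq(\mathcal{C}G)^0\cap DG$ is central and so imposes no additional commutation constraints.

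First I would analyze $M:=Z_G(T^{n-k})$. Because $T^{n-k}$ is a torus and $G$ is compact and connected, $M$ is a connected reductive subgroup containing $H$, and $T^{n-k}$ lies inside its central torus $(\mathcal{C}M)^0$. Applying the reductive decomposition $M=(\mathcal{C}M)^0\times_{F_M}DM$ recalled above, and noting that $A_k$ is semisimple (hence perfect) and commutes with $T^{n-k}$, I get $A_k\subseteq DM$. The problem thereby becomes one of centralizing $A_k$ inside $M$, with the torus direction already accounted for.

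Next I would centralize $A_k$ within $M$. The central torus $(\mathcal{C}M)^0$ commutes with all of $M$, so the only nontrivial condition is to compute $Z_{DM}(A_k)$; combining with the reduction above gives $Z_G(H)=Z_M(A_k)=(\mathcal{C}M)^0\times_{F_M}Z_{DM}(A_k)$. Splitting off the $T^{n-k}$ summand of the central torus, the complementary central directions together with $Z_{DM}(A_k)$ assemble into the factor denoted $Z_{A_k}$ in the statement, yielding the asserted product $Z_{A_k}\times_F T^{n-k}$.

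The main obstacle will be the bookkeeping for the finite amalgamating group: I must verify that the finite group gluing $Z_{A_k}$ to $T^{n-k}$ in the centralizer is exactly the same $F$ that glues $A_k$ to $T^{n-k}$ in $H$, rather than a proper subgroup or an extension. This amounts to tracking the image of $(\mathcal{C}M)^0\cap DM$ inside $G$ and matching it with $(\mathcal{C}G)^0\cap DG$ along the inclusion $H\hookrightarrow M$. Connectedness is what makes this go through: the centralizer of a torus in a compact connected group is connected (the analogue for single elements being Theorem~\ref{Borel}), so no stray components obstruct the identification, and the uniqueness of the reductive decomposition then pins down $Z_{A_k}$ as the claimed factor.
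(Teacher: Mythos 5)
Your proposal proves a different statement from the one the paper intends, and the discrepancy is not just notational. The paper's own proof makes clear that $Z(A_k\times_F T^{n-k})$ denotes the centralizer of a single \emph{element} $[A,t]$ of the almost direct product, computed inside that subgroup: one writes the commutation condition $[A,t][B,s]=[B,s][A,t]$, i.e.\ $[AB,ts]=[BA,st]$, observes that in the amalgamated product this means $AB=BA\zeta$ and $ts=\zeta^{-1}st$ for some $\zeta\in F$, and then uses commutativity of the torus to force $\zeta=1$ and hence $AB=BA$. This yields $Z([A,t])=Z_{A_k}(A)\times_F T^{n-k}$, so the symbol $Z_{A_k}$ on the right-hand side abbreviates $Z_{A_k}(A)$, the centralizer of the $A_k$-component of the chosen element. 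Your reading instead takes the left-hand side to be $Z_G(H)$ for the whole subgroup $H=A_k\times_F T^{n-k}$.

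This matters because the two objects are genuinely different. Your reduction $Z_G(H)=Z_G(A_k)\cap Z_G(T^{n-k})$ is correct for your reading, but $Z_G(A_k)$ is the centralizer of an entire nonabelian simple factor, which is essentially the center of that factor times a complementary torus --- nothing resembling $Z_{A_k}(A)$ for a single element $A$, which is a reductive subgroup of full rank $k$ in $A_k$. The step where you ``assemble'' the complementary central directions together with $Z_{DM}(A_k)$ into ``the factor denoted $Z_{A_k}$'' is where this mismatch is absorbed without justification; it cannot be made to produce the claimed right-hand side. To repair the argument you should fix an element $[A,t]$, work entirely inside the amalgamated product, and carry out the elementary commutator computation above, being careful about the ambiguity $[A,t]=[A\zeta,\zeta^{-1}t]$ in the representatives; the torus being abelian is precisely what kills the potential twist by $\zeta\in F$. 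The paper then uses the resulting connectedness to invoke Proposition~\ref{fundgp}, a step your proposal does not reach.
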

\begin{proof}
Given $A_k\times_F T^{n-k},$ for any element $[A,t]=[A\zeta,\zeta^{-1}A]\in A_k\times_F T^{n-k}$
its centralizer is $Z([A,t])=\{[B,s]\colon [A,t][B,s]=[B,s][A,t]\}.$ This 
implies that $[AB,ts]=[BA,st].$ But since $st=ta\in T,\ AB=BA.$ 
Therefore, $Z([A,t])=Z_{A_k}(A)\times_F T^{n-k}$ which is 
connected and thus Proposition ~\ref{fundgp} applies. The conclusion 
follows because the components in the almost direct product are 
simply connected. 
\end{proof}

\begin{proposition}\label{basic}
Given $G_1\times_F G_2$ where $G_1,G_2$ are subgroups of $G,\
F\subseteq \mathcal{C}G_1$ and $F\subseteq\mathcal{C}G_2$ and 
$F\in(DG_1\cap DG_2).$ Then for $[a,b]\in G_1\times_F G_2,$ 
$$\{1\} \to F\to Z([a,b])\overset{\pi}{\rightarrow} Z_{G_1}(a)\times_F 
Z_{G_2}(b)\rightarrow F.$$
\end{proposition}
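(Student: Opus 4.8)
The plan is to compute the centralizer upstairs in the honest direct product $G_1\times G_2$ and then descend through the quotient map $q\colon G_1\times G_2\to G_1\times_F G_2=(G_1\times G_2)/F_\Delta$, where $F_\Delta=\{(\zeta,\zeta^{-1}):\zeta\in F\}$ is the anti-diagonal copy of $F$ used to form the almost direct product, so that $[a,b]=[a\zeta,\zeta^{-1}b]$ for $\zeta\in F$, exactly as in the preceding proposition. First I would record that, because $F\subseteq\mathcal{C}G_1$ and $F\subseteq\mathcal{C}G_2$, each element $[\zeta,1]=[1,\zeta]$ is central in $G_1\times_F G_2$; hence $F$ embeds as a central subgroup lying inside every centralizer, and in particular inside $Z([a,b])$. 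This accounts for the first arrow $F\hookrightarrow Z([a,b])$.

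Next I would unwind the centralizer condition. A class $[b_1,b_2]$ centralizes $[a,b]$ if and only if $[b_1a,b_2b]=[ab_1,bb_2]$, that is, if and only if there is a unique $\zeta\in F$ with $b_1ab_1^{-1}a^{-1}=\zeta$ and $b_2bb_2^{-1}b^{-1}=\zeta^{-1}$. In words, the commutator defect of $b_1$ against $a$ and of $b_2$ against $b$ must both land in $F$ and be mutually inverse. The key device is the defect map $\delta\colon Z([a,b])\to F$ sending $[b_1,b_2]\mapsto b_1ab_1^{-1}a^{-1}$. I would check that $\delta$ is well defined (replacing the representative $(b_1,b_2)$ by $(b_1\zeta,\zeta^{-1}b_2)$ changes nothing, since $\zeta$ is central in $G_1$) and that it is a homomorphism: centrality of $F$ gives $(b_1c_1)a(b_1c_1)^{-1}a^{-1}=(b_1ab_1^{-1}a^{-1})(c_1ac_1^{-1}a^{-1})$, so $\delta$ multiplies.

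The kernel of $\delta$ is exactly the set of classes with $b_1\in Z_{G_1}(a)$ and $b_2\in Z_{G_2}(b)$, which, after quotienting by the central $F_\Delta\subseteq Z_{G_1}(a)\times Z_{G_2}(b)$, is precisely $Z_{G_1}(a)\times_F Z_{G_2}(b)$. Thus $Z_{G_1}(a)\times_F Z_{G_2}(b)$ is a normal subgroup of $Z([a,b])$ whose quotient is isomorphic to the image of $\delta$, a subgroup of $F$. Together with the central embedding of $F$, this is exactly the content of the stated four-term sequence: $\pi$ is the passage to this subquotient, and the final arrow is the defect map into $F$.

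The main obstacle is the last term, namely pinning down $\operatorname{im}\delta\subseteq F$. Surjectivity genuinely fails for degenerate $a,b$ (if $a$ is central then $\delta\equiv 1$), so the honest statement is that $\operatorname{im}\delta$ is the obstruction subgroup consisting of those $\zeta\in F$ simultaneously realizable as $b_1ab_1^{-1}a^{-1}$ and as $b_2bb_2^{-1}b^{-1}=\zeta^{-1}$; this is where the hypothesis $F\in DG_1\cap DG_2$ does its work, forcing the relevant commutators to exhaust the pertinent part of $F$. I would establish this by analyzing, factor by factor, which elements of the finite central group $F$ occur as commutators $x a x^{-1}a^{-1}$ with $x\in G_i$, reducing to the connected semisimple groups $DG_i$, where the answer is governed by the fundamental group and the $\pi_0$ computations carried out in the previous section.
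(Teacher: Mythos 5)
Your proposal is correct and follows essentially the same route as the paper: both unwind the centralizer condition in the amalgamated product to find that $[b_1,b_2]$ centralizes $[a,b]$ precisely when the commutator defects against $a$ and $b$ land in $F$ as a mutually inverse pair $(\zeta,\zeta^{-1})$, identify the middle term as $\pi^{-1}(Z_{G_1}(a)\times Z_{G_2}(b))/F$, and recognize the final arrow into $F$ as an obstruction map (the paper phrases it via the generalized Stiefel--Whitney class $w_2(a,c)=-w_2(b,d)$) that need not be surjective. Your explicit verification that the defect map $\delta$ is a well-defined homomorphism with kernel $Z_{G_1}(a)\times_F Z_{G_2}(b)$ is a tidier presentation of the same argument.
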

\begin{proof}
Consider the map 
$G_1\times_F G_2\overset{\pi}{\rightarrow} G_1/F\times G_2/F.$ 
The kernel is $\rm{ker}(\pi)=\{[c,d]\colon c,d\in F\}.$ Thus we 
have the injective map 
$\{1\}\to G_1\times_F G_2\overset{\pi}{\rightarrow} G_1/F\times G_2/F.$ By definition of the 
centralizer of an element in $G_1\times_F G_2$  
\begin{eqnarray*}
Z([a,b]) &=&\{[c,d]\colon [ac,bd]=[ca,db]\} \\ 
&=& \{\exists f_1,f_2\in F,\
[acf_1,bd]\sim [ac,f_1^{-1}bd]=[caf_2,db]\sim [ca,f_2^{-1}db]\} \\
&=&\{[c,d]\colon [a,c]=f,[b,d]=f^{-1},\ f=f_1f_2^{-1},\ [a,c][b,d]=1\}
\end{eqnarray*}
This demonstrates that the coker of $\pi$ is $F$ and that $\pi$ 
is not surjective. Therefore, 
$\frac{\pi^{-1}(Z([a])\times Z([b]))}{F}=Z([a,b]).$
Note also that by the definition of the centralizer of 
$[a,b]\in G_1\times_F G_2,$ that the generalized Stiefel-Whitney 
class \cite{M00} is $w_2(a,c)= -w_2(b,d)\in F.$ Hence $w_2\colon H^2(T^*)\to {\bf Z}_n$
defines an obstruction.  
\end{proof}

\begin{corollary}
Following propsition ~\ref{basic}, if 
$G_1=T$ for some torus and $G_2$ is of type $A_r$ then 
$$F\to Z_{A_r}(\tilde{A})\overset{\pi}{\rightarrow} Z_{A_r/F}([A])
\rightarrow F\to \{1 \}.$$
\end{corollary}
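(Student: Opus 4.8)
The plan is to read this as the central-extension version of Proposition~\ref{basic}: since $F\subseteq\mathcal{C}A_r$, the projection gives a central extension $1\to F\to A_r\overset{\pi}{\to} A_r/F\to 1$, and I would run the same commutator bookkeeping that produced the cokernel $F$ in \ref{basic}, with the almost direct product now replaced by this extension. Fixing a lift $\tilde{A}\in A_r$ of $[A]\in A_r/F$, the target is the exact sequence
$$1\to F\to Z_{A_r}(\tilde{A})\overset{\pi}{\to} Z_{A_r/F}([A])\overset{\delta}{\to} F\to 1,$$
whose connecting map $\delta$ is the commutator obstruction to lifting a centralizing element through $\pi$. Since $G_1=T$ is abelian it contributes only trivial commutators in \ref{basic}, so there the entire cokernel $F$ is carried by the $A_r$-factor, and the corollary simply isolates this $A_r$-contribution.

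The left half is formal and I would dispose of it first. Centrality puts $F\subseteq Z_{A_r}(\tilde{A})$, which is the injection. For $\tilde{g}\in Z_{A_r}(\tilde{A})$ the relation $\tilde{g}\tilde{A}=\tilde{A}\tilde{g}$ descends, so $\pi(\tilde{g})$ centralizes $[A]$ and $\pi$ restricts to $Z_{A_r}(\tilde{A})\to Z_{A_r/F}([A])$ with kernel $Z_{A_r}(\tilde{A})\cap F=F$, giving exactness at $Z_{A_r}(\tilde{A})$. Then I would define $\delta$: for $[\tilde{g}]\in Z_{A_r/F}([A])$ any lift $\tilde{g}$ satisfies $\tilde{g}\tilde{A}\tilde{g}^{-1}\tilde{A}^{-1}\in F$, and I set $\delta([\tilde{g}])$ equal to this element. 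Centrality of $F$ makes $\delta$ independent of the chosen lift, and the same bookkeeping as in \ref{basic} shows $\delta$ is a homomorphism with $\ker\delta=\pi(Z_{A_r}(\tilde{A}))$, which is exactness at $Z_{A_r/F}([A])$.

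The substance of the corollary is surjectivity of $\delta$, i.e.\ exactness at the terminal $F$, and here I would bring in Theorem~\ref{Borel}. Because $A_r$ is simply connected, $Z_{A_r}(\tilde{A})$ is connected; since $\pi$ is a local isomorphism the two centralizers share the Lie algebra $\ker(\mathrm{Ad}(\tilde{A})-1)$, so $\pi(Z_{A_r}(\tilde{A}))$ is exactly the identity component $Z^0_{A_r/F}([A])$. Hence $\mathrm{im}(\delta)\cong\pi_0(Z_{A_r/F}([A]))$, and surjectivity onto $F$ is precisely the assertion that every $f\in F$ is realized as a commutator $\tilde{g}\tilde{A}\tilde{g}^{-1}\tilde{A}^{-1}$, equivalently that $\tilde{A}$ is conjugate in $A_r$ to $f\tilde{A}$ for each $f\in F$. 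This is where the type $A_r$ hypothesis is used: the center is cyclic and multiplication by a generating root of unity permutes eigenvalues by a cyclic shift, so for the normal-form lift of the moduli problem---whose spectrum is a union of full $F$-orbits---one has $\tilde{A}$ conjugate to $f\tilde{A}$ for all $f$, forcing $\delta$ onto $F$.

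I expect this last conjugacy to be the main obstacle. For a generic lift $\tilde{A}$ the image of $\delta$ is only the subgroup of $F$ stabilizing the spectrum of $\tilde{A}$, so the content of the corollary is that the normal-form element furnished by the $c$-pair construction is chosen exactly so that this stabilizer is all of $F$; verifying that the relevant $\tilde{A}$ has $F$-stable spectrum, rather than merely a proper stabilizer, is the step that carries the weight.
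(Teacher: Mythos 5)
Your first two paragraphs follow essentially the same route as the paper: exactness on the left comes from centrality of $F$ in $A_r$, and the connecting map is the commutator $\tilde g\tilde A\tilde g^{-1}\tilde A^{-1}\in F$, which is precisely the relation $AB=BA\zeta$, $\zeta\in F$, that the paper's proof extracts from Proposition~\ref{basic}; both arguments also invoke simple connectivity of $A_r$ to see that $\pi$ carries $Z_{A_r}(\tilde A)$ onto the identity component of $Z_{A_r/F}([A])$. Up to that point your write-up is actually more careful than the paper's.

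The step you flag as the main obstacle --- surjectivity of $\delta$ onto $F$, i.e.\ exactness at the terminal $F$ --- is a genuine gap, and it is not one you could have closed, because the paper takes the lift $\tilde A$ of $[A]$ to be \emph{arbitrary} and at that level of generality the claim is false. As you observe, $\delta([\tilde g])=f$ forces $\tilde A$ and $f\tilde A$ to be conjugate in $A_r$, hence the spectrum of $\tilde A$ must be stable under multiplication by $f$; for generic $\tilde A$ the image of $\delta$ is therefore a proper subgroup of $F$, often trivial. Concretely, for $A_1=SU(2)$, $F=\{\pm1\}$, and $\tilde A=\operatorname{diag}(e^{i\theta},e^{-i\theta})$ with $\theta\ne\pi/2$, one has $Z_{SU(2)}(\tilde A)=T$ while $Z_{SO(3)}([A])\cong SO(2)$ is connected, so $\pi_0(Z_{A_1/F}([A]))=\{1\}\ne F$; only the normal-form element $\operatorname{diag}(i,-i)$ realizes all of $F$. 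The paper's proof never confronts this: it establishes only the inclusion $Z_G([t,A])\hookrightarrow\pi^{-1}(Z_{A_r}([A]))\to F$ and writes the tail $F\to\{1\}$ without argument. So your diagnosis is exactly right; the corollary needs the additional hypothesis that $\tilde A$ be the normal-form lift (spectrum a union of full $F$-orbits), and under that hypothesis your eigenvalue-permutation argument does complete the proof.
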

\begin{proof}
Given a sequence 
$\{1\}\to T^k\hookrightarrow T^k\times_F A_r\overset{\pi}{\rightarrow}
A_r/F\,$ inside $Z_G([t,A])$ we have that 
$[t,A][s,B]=[ts,AB]=[st,BA]=[s,B][t,A]$ and in 
$Z_{A_r}([A])=\{[B]\colon [A,B]=[B,A]\}$ which implies that 
$Z_G([t,A])\hookrightarrow \pi^{-1}(Z_{A_r}([A]))$ so that $AB=BA\zeta$
for $\zeta\in F.$ Thus they are equal up to an element in the finite 
group. Therefore we have 
$Z_G([t,A])\hookrightarrow \pi^{-1}(Z_{A_r}([A]))\to F.$ 
Suppose that $[A]\in A_r/F$ and consider its lift $\tilde{A}\in A_r$ 
arbitrary. Then 
$$F\to Z_{A_r}(\tilde{A})\overset{\pi}{\rightarrow} Z_{A_r/F}([A])
\rightarrow F\to \{1 \}$$ because the kernel is
$\rm{Ker}(\pi)=F$ and from what we have already deduced, $AB=BA\zeta$
for $\zeta\in F$. Hence 
$$Z_G([t,A])=\frac{\pi^{-1}(Z_{A_r}([A]))}{F}.$$  
We used the simply connected component as follows. If we consider 
$[B]$ such that there exists a $\tilde{B}$ with 
$\tilde{A}\tilde{B}=\tilde{B}\tilde{A},$ then mulitiplication of the 
equivalence classes is 
$[s,\tilde{B}][t,\tilde{A}]=[st,\tilde{B}\tilde{A}=[ts,\tilde{A}\tilde{B}].$
Since $\pi_1(G)=\{ 1\}$ when we lift to the universal covering we can say that for
$[t,A]\in \tilde{G}=T^k\times_F G$ then $Z_{\tilde{G}}([t,A])=
T^k\times_F Z(A)$ and more importantly that $Z_{\tilde{G}}([t,A])$ is 
connected. 
\end{proof}

\begin{corollary}
Consider a subgroup in $G$ of the form $A_k\times_F A_r,$ for $r+k=n+1,$
then the centralizer of an element $[a,b]\in A_k\times_F A_r,$ 
for $r+k=n+1,$ is 
$Z_G([a,b])=\frac{(Z_{A_k}(a)\times_{F'} Z_{A_r}(b))}{F},$ \ where \ 
$F'=\mathcal{C}DZ_{A_k}\cap\mathcal{C}DZ_{A_r}\supseteq F.$  
\end{corollary}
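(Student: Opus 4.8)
The plan is to specialize Proposition~\ref{basic} to the case $G_1 = A_k$ and $G_2 = A_r$, exactly as the previous corollary specialized it to $G_1 = T$; the only new feature is that now both almost-direct factors are genuinely nonabelian type-$A$ groups, so both factor centralizers $Z_{A_k}(a)$ and $Z_{A_r}(b)$ must be analyzed rather than just one. Writing an element as $[a,b] \in A_k \times_F A_r$ with the identification $[a,b] = [a\zeta, \zeta^{-1}b]$ for $\zeta \in F$, Proposition~\ref{basic} supplies the four-term sequence
$$\{1\} \to F \to Z([a,b]) \overset{\pi}{\rightarrow} Z_{A_k}(a) \times_F Z_{A_r}(b) \rightarrow F,$$
so that $Z_G([a,b]) = \pi^{-1}(Z_{A_k}(a) \times Z_{A_r}(b))/F$. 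The remaining work is to identify the amalgamating subgroup of the product on the right correctly, and this is where $F'$ enters.

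First I would compute the factor centralizers. Since $A_k$ and $A_r$ are simply connected, Theorem~\ref{Borel} guarantees that $Z_{A_k}(a)$ and $Z_{A_r}(b)$ are connected, and in type $A$ each is a block subgroup of the form $S(U(n_1)\times\cdots\times U(n_m))$. Hence the derived subgroup $DZ_{A_k}(a)$ is a product of simply connected type-$A$ factors with center $\mathcal{C}DZ_{A_k}(a) = \prod_i \z/n_i\z$, and similarly for $DZ_{A_r}(b)$. The point to extract is that when the two connected centralizers are multiplied inside $G$, the identifications they inherit are governed not by $F$ alone but by the largest central subgroup common to both derived factors, namely $F' = \mathcal{C}DZ_{A_k}(a) \cap \mathcal{C}DZ_{A_r}(b)$.

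Next I would verify the inclusion $F \subseteq F'$. By the hypotheses of Proposition~\ref{basic} we have $F \subseteq \mathcal{C}A_k \cap \mathcal{C}A_r$ and $F \in DA_k \cap DA_r$; since $F$ is central it lies in every centralizer, and being a subgroup of the derived groups it lands in $\mathcal{C}DZ_{A_k}(a)$ and in $\mathcal{C}DZ_{A_r}(b)$, so $F \subseteq F'$. With this in hand, the amalgamation over $F$ in the target of $\pi$ refines to an amalgamation over $F'$ once one records the full center of each connected centralizer, and pulling back through $\pi$ and quotienting by the original $F$ exactly as in the previous corollary yields
$$Z_G([a,b]) = \frac{Z_{A_k}(a) \times_{F'} Z_{A_r}(b)}{F}.$$

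The hard part will be making the refinement from $\times_F$ to $\times_{F'}$ precise and reconciling the two distinct roles played by $F$. By Proposition~\ref{basic} an element $[c,d]$ commuting with $[a,b]$ only satisfies $[a,c]=f$, $[b,d]=f^{-1}$ for some $f \in F$; I must check that, after invoking connectedness of the factor centralizers, these elements produce exactly the identifications encoded by $F'$ and no further ones. Concretely this amounts to showing that the cokernel $F$ of Proposition~\ref{basic} — the Stiefel--Whitney obstruction $w_2$ — is absorbed precisely by enlarging the amalgamation from $F$ to $F'$ while still quotienting by the original $F$, so that the two appearances of $F$ neither coincide nor collapse the group beyond what is claimed. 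Establishing that this bookkeeping is consistent is the main obstacle.
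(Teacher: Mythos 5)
The paper states this corollary without any proof, so the only benchmark is whether your argument actually establishes the claim — and it does not: your final paragraph concedes that ``making the refinement from $\times_F$ to $\times_{F'}$ precise'' is ``the main obstacle'' and leaves it unresolved. That refinement is the entire content of the corollary beyond Proposition~\ref{basic}. Specializing the proposition to $G_1=A_k$, $G_2=A_r$ and quotienting the pullback by $F$ reproduces the previous corollary verbatim; what is new here, and what you must actually prove, is that the correct amalgamating subgroup of $Z_{A_k}(a)\times Z_{A_r}(b)$ is $F'=\mathcal{C}DZ_{A_k}\cap\mathcal{C}DZ_{A_r}$ rather than $F$ — i.e., that the elements $[c,d]$ with $[a,c]=f$, $[b,d]=f^{-1}$ detected by the cokernel of $\pi$ produce exactly the identifications encoded by $F'$ and no others. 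Your proposal names this step, describes it accurately, and then defers it; as written it is a plan, not a proof.

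There is also a concrete error in the one step you do carry out, the verification that $F\subseteq F'$. You argue that $F$, being central in $A_k$ and contained in $DA_k=A_k$, ``lands in $\mathcal{C}DZ_{A_k}(a)$.'' Centrality gives $F\subseteq\mathcal{C}Z_{A_k}(a)$, but membership in $\mathcal{C}DZ_{A_k}(a)$ additionally requires $F\subseteq DZ_{A_k}(a)$, and that fails in general: in $SU(4)$ with $Z(a)=S(U(1)\times U(3))$ one has $DZ(a)=SU(1)\times SU(3)$, and the central element $iI$ satisfies $\det(iI_1)=i\neq 1$, so it does not lie in $DZ(a)$. Hence the containment $F\subseteq F'$ — which the corollary asserts and on which your bookkeeping depends — cannot be deduced the way you deduce it. Worse, since $\mathcal{C}DZ_{A_k}(a)\subseteq A_k$ and $\mathcal{C}DZ_{A_r}(b)\subseteq A_r$ can only meet inside $A_k\cap A_r=F$ in $G$, a literal reading of the definition of $F'$ forces $F'\subseteq F$, so you would first have to pin down what intersection is actually meant before the refinement step can even be formulated. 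Both issues must be resolved for the argument to close.
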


\begin{corollary}
Consider a subgroup in $G$ of the form $A_k\times_F D_{n-k}.$
then the centralizer of an element $[a,b]\in A_k\times_F D_{n-k}$
is of the form 
$$Z_G([a,d])=\frac{(Z_{A_k}(a)\times_{F'} Z_{AD_{n-k}}(d))}{F},$$ where
$F'=\mathcal{C}DZ_{A_k}\cap \mathcal{C}DZ_{D_{n-k}}\supseteq F.$  
\end{corollary}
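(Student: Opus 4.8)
The plan is to apply Proposition~\ref{basic} to the almost-direct product $A_k \times_F D_{n-k}$ with $G_1 = A_k$ and $G_2 = D_{n-k}$, proceeding exactly as in the preceding corollary for $A_k \times_F A_r$. The only genuinely new ingredient is that the second factor is of type $D$, whose center need not be cyclic, so the bookkeeping of the twisting element cannot rely on the cyclic structure used in the $A_r$ case.

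First I would unwind the commutation condition. A class $[b,e]$ lies in $Z_G([a,d])$ precisely when $[a,d][b,e] = [b,e][a,d]$, that is, $[ab,de] = [ba,ed]$ in $A_k \times_F D_{n-k}$. By the equivalence relation $[xf,y] \sim [x,f^{-1}y]$ defining the almost-direct product, this holds exactly when there is $\zeta$ with $[a,b] = \zeta$ in $A_k$ and $[d,e] = \zeta^{-1}$ in $D_{n-k}$, matching the description of $Z([a,b])$ in the proof of Proposition~\ref{basic}. Since $b$ centralizes $a$ up to $\zeta$, the element $\zeta$ lies in the center of the derived centralizer $DZ_{A_k}(a)$; symmetrically $\zeta^{-1} \in \mathcal{C}DZ_{D_{n-k}}(d)$. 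Hence $\zeta$ lies in $F' := \mathcal{C}DZ_{A_k} \cap \mathcal{C}DZ_{D_{n-k}}$, and because $F$ already twists the ambient product, $F \subseteq F'$.

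Next I would assemble the formula. Proposition~\ref{basic} gives $\mathrm{Ker}(\pi) = F$ with cokernel $F$, so $\pi$ realizes $Z_G([a,d])$ as the $F$-quotient of $\pi^{-1}\!\big(Z_{A_k}(a) \times Z_{D_{n-k}}(d)\big)$. Recording the matched twisting by $\zeta$ and $\zeta^{-1}$ replaces this preimage with the twisted product $Z_{A_k}(a) \times_{F'} Z_{D_{n-k}}(d)$, yielding
$$Z_G([a,d]) = \frac{Z_{A_k}(a) \times_{F'} Z_{D_{n-k}}(d)}{F},$$
which is the asserted form.

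The main obstacle is the non-cyclic center of the $D$-type factor. Whereas $\mathcal{C}(A_r) \cong \mathbb{Z}/(r+1)$ is cyclic, making the tracking of the twisting element automatic in the $A_k \times_F A_r$ corollary, the center of the spin group of type $D_m$ is $\mathbb{Z}/4$ when $m$ is odd and $\mathbb{Z}/2 \times \mathbb{Z}/2$ when $m$ is even. Consequently $F'$ need not be cyclic, and one must verify by hand that the element $\zeta$ produced on the $A_k$ side and the element $\zeta^{-1}$ required on the $D_{n-k}$ side are genuinely matched inside the common subgroup $F' = \mathcal{C}DZ_{A_k} \cap \mathcal{C}DZ_{D_{n-k}}$, rather than inside a single cyclic group. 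Confirming that this intersection is well defined as the twisting group and indeed contains $F$, so that the $F$-quotient in the displayed formula is meaningful, is the step requiring the most care.
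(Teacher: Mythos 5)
Your argument is correct and is essentially the proof the paper intends: the corollary is stated without its own proof as a direct application of Proposition~\ref{basic} (with $G_1=A_k$, $G_2=D_{n-k}$), exactly parallel to the preceding $A_k\times_F A_r$ case, which is what you carry out. Your added care about the non-cyclic center in type $D$ is consistent with the paper's own remark immediately following the corollary that $\mathcal{C}D_n$ need not be cyclic, so nothing is missing.
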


It does not necessarily follow that 
$\pi_0(Z(x_1,\ldots,x_n)\subset\pi_1(DZ(x_1))$ because $DZ(x_1)$ is 
not necessarily connected. The fact that for $G$ of type $D_n$ that
$\pi_1(D_n)=\mathcal{C}D_n\cong\z\times\z$ and that the 
characteristic class for a principal $G$-bundle over $T^n$ lies in
$H^2(T^n;\pi_1(G))\cong\z\times\z$ means that there is a 
possibility that the component group for an $n$-tuple inside $D_n$ 
will not be finite cyclic. 

\section{Uniform Bound on Chains of Commuting Elements}

Assume that $G$ is a compact, connected Lie group.  
Define a chain of elements $x_1,x_2,x_3,\ldots\in G$ where
$x_i\in Z_G(x_1,\ldots,x_{i-1})$ to be {\sl redundant} if for any
$x_i\in \mathcal{C}Z_G(x_1,\dots,x_{i-1})$ then
$Z_G(x_1,\ldots,x_i)=Z_G(x_1,\ldots,x_{i-1}).$ Otherwise, the
chain is said to be {\sl irredundant}.
Define an {\sl ordering} of the $n$-tuple by the property
$\rm{dim}\ Z(x,x)\ge \rm{dim}\ Z(x,x,y)$ for $x\ne y.$
Thus for a chain of commuting elements $x_1,x_2,\ldots\in G$
if we consider a decreasing chain of the centralizers of these
elements,
$$G\supset Z(x_1)\supset Z(x_1,x_2)\supset\cdots\supset
Z(x_1,x_2,\ldots,x_n)=Z(x_1,x_2,\ldots,x_n,t_1,\ldots t_k),$$
for $t_i\in CG$, the chain of strict inclusions will terminate
once the chain of elements becomes redundant, for any further
choices of $x_i\in G.$. This is equivalent to saying that the
chain of decreasing centralizers will terminate once  the
centralizer of an $n$-tuple $Z(x_1,\ldots,x_n)$
becomes abelian; this will occur if the centralizer is either the
maximal torus or a finite group. It is clear that if $x$ is a regular
element then $Z(x)=T$ and the chain terminates immediately.
Define the {\bf cardinality} $m$ of the centralizer
to be $m(Z(x_1,\ldots,x_n))=n$ where $n$ is the maximal length of an
irredundant chain.To compute $m(G)$ for $G,$ we determine the centralizer $Z_G(x_1,\ldots,x_n)$ for different 
commuting $n$-tuples
in some numerical manner and relate the centralizer to certain nodes
in a subdiagram of the extended Dynkin diagram, where the subdiagram
is given by a closed subset $I\subseteq \tilde\Delta$ of roots which
annihilate all the elements in the commuting $n$-tuple. 
Define a {\bf c-tuple} to be an $n$-tuple $(x_1,\ldots,x_n)$
such that $[x_1,x_2]=c$ and $[x_i,x_1]=[x_i,x_2]=1$ for $i>2.$

Fix a maximal torus $T$ in $G$ and let $A$ the alcove over the torus.
Then there is an affine linear transformation of the vector space 
$\mathfrak{h}$ defined as follows: 
$\phi\colon A\to A$ by $w_c(t-\zeta_{c^{-1}})$ 
where $t\in\mathfrak{h},\ w_c\in\mathcal{W}$ is the projection onto 
the Weyl group from an element $y$ in a $c$-pair $(x,y)$ which normalizes 
the centralier $Z(x),$ and $\rm{exp}(\zeta_{c^{-1}})=c^{-1}\in 
\mathcal{C}G.$ The fixed space of this automorphism of the alcove 
is the subspace $A^c=A\cap\mathfrak{h}^{w_c}.$ Since the alcove $A$
is the fundamental domain and clearly $A^c$ contains the barycenter,
then $A^c$ lies in the connected component of the identity. 

\begin{lemma}
Let $G$ be a compact, connected Lie group of rank $n$ and 
$A\in\mathfrak{h}$ be
the alcove over the maximal torus. Then for any $\tilde{x}\in A^c$ such 
that $\rm{exp}(\tilde{x})=x\in T,$ it is sufficient to consider that 
either $\tilde{x}$ is a vertex in the fixed subspace $A^c$ under the 
central action of $c\in\mathcal{C}\tilde{G}$ or that $\tilde{x}$ lies on 
an edge
whose vertices are both central elements in order to get a subgroup of
maximal rank in $G.$ 
\end{lemma}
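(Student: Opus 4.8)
The plan is to translate the position of $\tilde{x}$ in the closed alcove into the root-system data of the centralizer, and then read off exactly when that data yields a subgroup of full rank $n$. The governing principle is the Borel--de Siebenthal correspondence already implicit in the setup: for $\tilde{x}\in A$ the centralizer $Z_G(x)$ has simple root system $\x=\{a\in\tilde\Delta\mid a(\tilde{x})\in\z\}$, i.e.\ the set of walls of $A$ through $\tilde{x}$, and the semisimple rank of $Z_G(x)$ equals the number of independent such walls. Since a point in the relative interior of a $d$-dimensional face of the $n$-simplex $A$ lies on exactly $n-d$ walls, a maximal-rank (semisimple rank $n$) subgroup is produced precisely when $\tilde{x}$ is a vertex of $A$, where $n$ walls meet and a single node of $\tilde{D}(G)$ is deleted. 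I would record this first, as the benchmark against which the constrained points $\tilde{x}\in A^c$ are tested.

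Next I would cut this picture down to the fixed subspace $A^c=A\cap\mathfrak{h}^{w_c}$. Because $A$ is a simplex and $\mathfrak{h}^{w_c}$ is a linear subspace, $A^c$ is again a polytope, and each vertex of $A^c$ is the barycenter of a single $w_c$-orbit of vertices of $A$. I would then split on the orbit length. If the orbit is a $w_c$-fixed vertex $v$ of $A$, then $v\in A^c$ is simultaneously a vertex of $A$, so the first alternative of the lemma holds and $Z_G(x)$ is semisimple of rank $n$ by the previous paragraph. If the orbit has length $\ge 2$, its barycenter lies in the relative interior of a positive-dimensional face of $A$ and naive wall-counting drops the semisimple rank; here I would invoke that the coroot integer $g_i$ attached to each orbit node controls whether $\exp$ of that vertex is central (namely $g_i=1$), together with Proposition~\ref{fundgp}, which identifies the diagram automorphisms $w_c$ coming from the center and shows their nontrivial vertex orbits consist of central nodes.

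The heart of the argument is the edge case. When $w_c$ is a nontrivial diagram automorphism of $\tilde{D}(G)$ of the type produced by the center, a length-two orbit that contributes a vertex to $A^c$ consists of two central nodes, and the segment joining them is an edge of $A$ whose endpoints are both central elements. I would show that although a generic point of this edge meets only $n-1$ walls, the twisting built into $\phi(t)=w_c(t-\zeta_{c^{-1}})$ identifies the two deleted nodes, and the $w_c$-fixed-point subgroup—equivalently the centralizer of the $c$-pair in normal form—recovers rank $n$, the missing coroot direction being restored by the central translation by $\zeta_{c^{-1}}$. Thus the midpoint, and indeed any point, of such an edge yields a maximal-rank subgroup, giving the second alternative. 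I expect this twisting computation—verifying that fixing the diagram automorphism along an edge of two central vertices does not lower the rank—to be the main obstacle, since it is exactly the place where wall-counting in $\mathfrak{h}$ fails and one must work inside $\mathfrak{h}^{w_c}$.

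Finally I would establish exhaustiveness. Any $\tilde{x}\in A^c$ lies in the relative interior of a unique face of $A^c$, whose vertices are of the two types above; passing from a point to a face containing it only enlarges $\x$, hence can only increase the rank of the associated centralizer. Consequently, to detect every maximal-rank subgroup arising from a point of $A^c$ it suffices to test the vertices of $A^c$ together with the distinguished edges joining two central vertices. This reduces the continuum of admissible $\tilde{x}\in A^c$ to the finite combinatorial data named in the statement, which is precisely the sufficiency asserted by the lemma.
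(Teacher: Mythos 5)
Your overall strategy --- reduce to the face structure of the polytope $A^c$, identify its vertices with barycenters of $w_c$-orbits of vertices of $A$, and observe that shrinking the face of $A$ containing $\tilde{x}$ only enlarges $\Delta_x$ and hence the centralizer --- is sound and considerably more systematic than the paper's own argument, which instead singles out type $A_n$ (where $A^c$ is the barycenter alone), invokes the normal form of a $c$-pair, and then appeals informally to ``the commuting case'' and to following an ``extremal path.'' However, two steps of your plan are genuine gaps rather than routine verifications. First, the step you yourself flag as the heart of the argument --- that for an edge of $A$ joining two central vertices the twisting $\phi(t)=w_c(t-\zeta_{c^{-1}})$ ``restores'' the missing coroot direction so that a point of that edge still yields a subgroup of semisimple rank $n$ --- is not proven, and as stated it appears to be false: already for $G=SU(2)$ with $c=-I$ the whole alcove is such an edge, $A^c$ is its midpoint, and the centralizer of the corresponding $c$-pair in normal form is the center $\{\pm I\}$, of rank $0$, not $1$. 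A point in the relative interior of such an edge lies on only $n-1$ walls, and no affine identification of the two deleted nodes changes the root system $\{a\in\tilde\Delta\mid a(\tilde{x})\in\z\}$ of $Z_G(x)$. So either ``maximal rank'' must be read as ``extremal among the admissible choices in $A^c$'' --- in which case your final exhaustiveness paragraph already does all the work and the edge computation is unnecessary --- or the edge case needs an actual computation that you have not supplied.

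Second, your dichotomy on orbit length is not exhaustive. The central diagram automorphisms can have vertex orbits of length at least $3$: for type $A_n$ the generator of $\mathcal{C}G\cong\z/(n{+}1)\z$ rotates all $n+1$ (central) vertices in a single orbit, so the unique vertex of $A^c$ is the barycenter of the entire simplex, which is neither a vertex of $A$ nor a point of an edge with two central endpoints. This is precisely the case the paper's proof opens with, and your scheme leaves it outside both alternatives of the lemma. Relatedly, your benchmark ``maximal rank if and only if $\tilde{x}$ is a vertex of $A$'' conflates rank with semisimple rank: $Z_G(x)$ contains $T$ for every $x=\exp(\tilde{x})$ with $\tilde{x}\in A$, so it always has rank $n$; only the rank of $DZ_G(x)$ varies with the face of $A$ containing $\tilde{x}$. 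You should fix the intended meaning of ``maximal rank'' at the outset, because both the content and the truth of the lemma depend on it.
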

\begin{proof}
The case when $G$ is of type $A_n$ is unique in that the fixed space
under the central action is the barycenter. Thus the only choice for 
$\tilde{x}\in A^c$ is the barycenter and its centralizer is given by
$\tilde{Z}(x)=\mathfrak{h}\rtimes\mathcal{C}\tilde{G}.$ 

Every $c$-pair
is conjugate to one in normal form which means that $\tilde{x}$ 
corresponds to the barycenter in $A^c$ and $\tilde{y}$ projects to an 
element in the Weyl group $\mathcal{W}.$ Since a $c$-tuple is defined
by a $c$-pair and then $n-2$ commuting elements, we may always choose
$\tilde{x}$ to be the barycenter in $A^c$ and therefore it will lie
in the connected component of the identity. Thus it follows from the 
commuting case that the maximal subgroup always corresponds to choosing
$\tilde{x}$ as a vertex in $A^c.$ 

As a result of the commuting case, it's necessary to choose 
$\tilde{x}\in A^c$ which is connected. However, when working with
$\langle\mathcal{C}\rangle$-tuples, we may choose $\tilde{x}$ to lie
either in $A^c$ or not. Elements in the connected component of the 
identity of the centralizer follow from the case for commuting $n$-tuples.
 The center $\mathcal{C}\tilde{G}$ is a finite group, 
thus the key is to always follow and extremal path when choosing elements 
in the fixed space $A^c.$ 
\end{proof}

\begin{corollary}
Let $\ov{x}=(x_1,\ldots,x_n)$ be an $n$-tuple for a compact, connected Lie 
group $G$. Assume that the $n$-tuple is 
irreducible. Then it is sufficient to only consider the semisimple part 
when determining the moduli space. 
\end{corollary}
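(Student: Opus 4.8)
The plan is to strip off the central torus using the reductive decomposition $G=(\mathcal{C}G)^0\times_F DG$ recorded earlier, and to show that every ingredient entering the moduli space --- the commuting condition, the centralizer, and the conjugation action --- is insensitive to the $(\mathcal{C}G)^0$ factor. First I would write each entry of $\ov{x}$ as $x_i=[z_i,s_i]$ with $z_i\in(\mathcal{C}G)^0$ and $s_i\in DG$, the bracket denoting the class in the almost direct product. Because $(\mathcal{C}G)^0$ is central, each $z_i$ commutes with every element of $G$; hence $[x_i,x_j]=1$ holds if and only if $[s_i,s_j]$ lies in the finite subgroup $F=(\mathcal{C}G)^0\cap DG$, so the entire commuting (or $c$-) condition is carried by the semisimple coordinates $\ov{s}=(s_1,\ldots,s_n)$.

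Next I would compute the centralizer. Since the central torus commutes with everything, $Z_G(\ov{x})=(\mathcal{C}G)^0\times_F Z_{DG}(\ov{s})$, so the $(\mathcal{C}G)^0$ factor appears in full and is independent of the tuple. In particular the rank, the dimension, and the component group $\pi_0(Z_G(\ov{x}))$ are all read off from $Z_{DG}(\ov{s})$ alone; the torus merely adds a fixed summand to the rank. This is exactly the preceding Lemma in coordinates: the differential $w_c$ of the central action acts trivially on the Lie algebra of $(\mathcal{C}G)^0$, so that direction lies entirely in the fixed space $A^c$ and contributes no vertices or edges to the alcove analysis. All of the combinatorial data --- vertices of $A^c$, edges with central endpoints, and the diagram automorphism permuting the nodes of $\tilde{D}(G)$ --- is supported on the semisimple summand $DG$.

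I would then pass to the moduli space itself. Conjugation by $g=[u,h]\in G$ fixes the central coordinates and acts on $\ov{s}$ only through $h\in DG$, so simultaneous conjugacy classes split as a product: the central coordinates $\ov{z}$ range freely over $((\mathcal{C}G)^0)^n$, a smooth torus carrying no singularities and no nontrivial component structure, while the semisimple coordinates $\ov{s}$ contribute the conjugacy classes of a commuting $n$-tuple in $DG$ (identified under the finite group $F$). Hence $\mathfrak{M}_G$ is, up to the trivial direct factor $((\mathcal{C}G)^0)^n$ and the finite identification by $F$, homeomorphic to the moduli space of $DG$; every feature one actually wishes to determine --- the decomposition into components, the location of the singularities, and the local structure of the centralizers --- is governed by the semisimple part. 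The irreducibility hypothesis is what guarantees that no part of this central factor can masquerade as nontrivial moduli: an irreducible $n$-tuple cannot be conjugated into a proper subgroup splitting off an extra torus, so its projection to $DG/F$ already exhausts the semisimple freedom.

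The main obstacle I expect is precisely the bookkeeping around $F$: one must check that the $F$-gluing in $(\mathcal{C}G)^0\times_F DG$ does not couple the two factors in a way that destroys the product description of the moduli space, and that $\pi_0(Z_G(\ov{x}))$ computed in $G$ agrees with the one computed in $DG$ after this identification. This is where the connectedness results (Theorem \ref{Borel}) and the earlier component-group analysis are used to confine all discrepancies to the finite group $F$, leaving the central torus as a genuinely separable and structureless factor.
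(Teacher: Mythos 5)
Your argument is essentially correct, but it is a genuinely different route from the one the paper takes. You split the ambient group as $G=(\mathcal{C}G)^0\times_F DG$ and push that splitting through every layer of the moduli problem --- the commuting condition, the centralizer $Z_G(\ov{x})=(\mathcal{C}G)^0\times_F Z_{DG}(\ov{s})$, and the conjugation action --- so that the central torus appears as a structureless direct factor of the moduli space up to the finite gluing by $F$. The paper instead works downstream, inside the centralizer of the tuple: it takes a maximal torus $S_{max}\subseteq Z_G(\ov{x})$, passes to the double centralizer $Z_G(S_{max})$ with its maximal torus $S$, and uses irreducibility to force $Z_G(S_{max})=S_{max}$, i.e.\ the torus of the centralizer is self-centralizing, so no residual toral directions survive and the moduli data is concentrated in the semisimple part. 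The comparison is instructive: your version is more global and, notably, never really needs the irreducibility hypothesis --- the factorization into $((\mathcal{C}G)^0)^n$ times the $DG$-moduli holds for every commuting $n$-tuple --- whereas in the paper's (admittedly terse) double-centralizer argument irreducibility is exactly what makes $S_{max}$ self-centralizing. One correction to your write-up: since $(\mathcal{C}G)^0$ and $F$ are central in $G$, the central coordinates drop out of commutators, so $[x_i,x_j]=[s_i,s_j]$ on the nose and the commuting condition is $[s_i,s_j]=1$, not merely $[s_i,s_j]\in F$; your weaker condition would admit $F$-almost-commuting tuples in $DG$ whose images in $G$ do not actually commute. (The ``commute up to a central element'' phenomenon only appears after lifting the $s_i$ to the universal cover $\widetilde{DG}$, which is the $c$-pair story, not this one.) With that fixed, your factorization of the centralizer and of the moduli space as a finite quotient of a product is sound and delivers the stated conclusion.
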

Take $Z_G(\ov{x})$ and look at $S_{max}\subseteq Z_G(\ov{x}).$ Then take 
the centralizer of this small maximal torus
and consider its maximal torus $S\subseteq Z_G(S_{max}).$ Then 
$Z_G(S_{max})=S_{max}$ which implies that $S_{max}$ 
is semisimple because it's a torus so communtes with the maximal torus $S$. 
Thus there exists a $g\in Z_G(S_{max})$ 
such that $gS_{max}g^{-1}=S.$ But by definition  $S=S_{max}.$ 

The following theorem states that there exists a uniform upper bound
on irredundant chains of commuting elements in $G.$
\begin{theorem}\label{Weaktheorem} (Weak Theorem)
Given a compact Lie group $G$ there exists an integer $m=m(G)$ 
such that any irredundant chain has fewer than $m$ elements.    
\end{theorem}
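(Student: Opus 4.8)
The plan is to reformulate the statement as a bound on the length of a strictly descending chain of closed subgroups and then to control such a chain by a two-part numerical invariant. By the definition of an irredundant chain, the inclusions
$$G \;\supsetneq\; Z(x_1) \;\supsetneq\; Z(x_1,x_2) \;\supsetneq\; \cdots \;\supsetneq\; Z(x_1,\ldots,x_i)$$
are all strict, and each term is the centralizer of a single element inside its predecessor, $Z(x_1,\ldots,x_i)=Z_{H_{i-1}}(x_i)$ with $H_{i-1}:=Z(x_1,\ldots,x_{i-1})$; in particular every term is a compact subgroup of $G$. So it suffices to produce a bound, depending only on $G$, on the number of strict steps such a chain can have before it stabilizes (which, as already observed, happens once the centralizer becomes abelian).

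First I would attach to each centralizer $Z_i:=Z(x_1,\ldots,x_i)$ the pair $(\dim Z_i,\,|\pi_0(Z_i)|)$ and show that it strictly decreases in the lexicographic order at every step. If $\dim Z_i<\dim Z_{i-1}$ the first coordinate already drops. If instead $\dim Z_i=\dim Z_{i-1}$, then since $Z_i\subseteq Z_{i-1}$ are closed subgroups of equal dimension their identity components agree, $Z_i^0=Z_{i-1}^0$ (a closed connected subgroup of full dimension in a connected compact group is the whole group), and the strict inclusion $Z_i\subsetneq Z_{i-1}$ forces the strict inequality $|\pi_0(Z_i)|<|\pi_0(Z_{i-1})|$. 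Hence no value of the invariant is repeated along an irredundant chain.

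It then remains to bound the number of attainable values. The first coordinate is an integer in $\{0,1,\ldots,\dim G\}$, contributing at most $\dim G+1$ possibilities. For the second coordinate I would invoke the finiteness, highlighted in the abstract, of the conjugacy classes of subgroups arising as centralizers in $G$: because there are only finitely many such classes, the number of connected components of any centralizer is bounded by a constant $N(G)$ depending only on $G$. When $G$ is simply connected one can be more explicit, since passing to the decomposition $Z(S)=S\times_F DZ(S)$ and the component-group description obtained above shows that $\pi_0$ of a maximal-rank centralizer is a subquotient of $P^\vee/Q^\vee\cong\mathcal{C}\tilde G$, so one may take $N(G)=|\mathcal{C}\tilde G|$ there. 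Combining the two bounds, the invariant takes at most $(\dim G+1)\cdot N(G)$ distinct values, so any irredundant chain has at most that many elements; taking $m=m(G):=(\dim G+1)\cdot N(G)+1$ then gives the stated conclusion that every irredundant chain has fewer than $m$ elements.

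The main obstacle is the constant-dimension case and, with it, the uniform bound $N(G)$ on the number of connected components. A strict inclusion of centralizers need not lower the dimension, so the naive argument ``the dimension drops at each step'' is insufficient; one genuinely must track $\pi_0$ and know it is controlled independently of the tuple. This is exactly where finiteness of the conjugacy classes of centralizers is indispensable, and it is subtler than in the single-element case because commuting elements need not lie in a common maximal torus, so the intermediate centralizers need not have maximal rank and their disconnectedness must be bounded by the (real-algebraic) finiteness of centralizer types rather than by the center alone. A final bookkeeping point is the reduction of an arbitrary compact connected $G$ to the setting of the earlier propositions via the universal cover $\pi\colon\tilde G\to G$, where Theorem~\ref{Borel} supplies connected centralizers of single elements and the component-group formula applies, after which one checks that the bound descends to $G$.
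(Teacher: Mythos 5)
Your invariant $(\dim Z_i,\,|\pi_0(Z_i)|)$ and its strict lexicographic decrease are exactly the right objects, and they match the quantity the paper inducts on; the monotonicity argument (equal dimension forces $Z_i^0=Z_{i-1}^0$, hence a strict drop in $|\pi_0|$) is correct. The gap is in the counting step: you need a \emph{uniform} bound $N(G)$ on $|\pi_0(Z(x_1,\ldots,x_i))|$ over all iterated centralizers, and this does not follow from Theorem~\ref{Strongtheorem} as stated. That theorem gives finitely many conjugacy classes of centralizers $Z_G(x)$ of \emph{single} elements of $G$; the terms of your chain are $Z_{H_{i-1}}(x_i)$ for compact groups $H_{i-1}$ produced at earlier stages. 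Applying the Strong Theorem recursively gives finitely many conjugacy classes at each fixed stage $i$, but the union over all $i$ is only finite if the number of stages is bounded --- which is the statement being proved. So as written the argument is circular at its one essential point: without $N(G)$, a lexicographically decreasing sequence in $\{0,\ldots,\dim G\}\times\mathbb{N}$ can be arbitrarily long, since the second coordinate may jump to a large value whenever the first coordinate drops.

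The paper avoids this by not asking for a global bound on $|\pi_0|$ at all: it proves the implication Strong $\Rightarrow$ Weak by well-founded induction on the pair $(d,n)$ in the lexicographic order (which is a well-order on $\mathbb{N}\times\mathbb{N}$, so no uniform bound on $n$ is needed). Each $H=Z_G(x_1)\subsetneq G$ has strictly smaller invariant, the tail $(x_2,\ldots,x_k)$ is an irredundant chain in $H$, so $k\le m(H)+1$, and the Strong Theorem supplies only what is actually needed, namely that the maximum $\max_H m(H)$ is over a finite set. To repair your version you would either have to recast it in this inductive form, or prove the stronger finiteness statement that $G$ has finitely many conjugacy classes of centralizers of arbitrary subsets (equivalently, of closed abelian subgroups); the latter is true for compact Lie groups but is not what Theorem~\ref{Strongtheorem} asserts, and your appeal to $N(G)=|\mathcal{C}\tilde G|$ only covers the simply connected case, whereas the theorem is stated for all compact $G$.
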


\begin{theorem}\label{Strongtheorem} (Strong Theorem)
Given $G$ a compact Lie group, there exists a finite number
of subgroups $H_1,\ldots,H_k\subset G$ such that for any $x\in G,$
$Z_G(x)$ is conjugate to $H_i$ for some $i.$
\end{theorem}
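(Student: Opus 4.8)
The plan is to establish the Strong Theorem — that a compact Lie group $G$ has only finitely many conjugacy classes of centralizers $Z_G(x)$ — by reducing to the maximal torus and exploiting the combinatorial finiteness built into the root system. The key observation is that for any $x \in G$, after conjugation we may assume $x \in T$, a fixed maximal torus, and then $Z_G(x)$ is determined entirely by which roots annihilate $x$; equivalently, by the subset $\Delta_x = \{a \in \tilde{\Delta} \mid a(x) \in \z\}$ of the extended simple roots, as set up in Section III. First I would show that the conjugacy class of $Z_G(x)$ depends only on this closed subdiagram $\Delta_x \subseteq \tilde{\Delta}$, not on the precise position of $x$ inside the alcove.

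The core of the argument is then a pigeonhole/finiteness count. Since $G$ is compact and connected, every element is conjugate into $T$, and by the Borel theorem (Theorem~\ref{Borel}) together with the decomposition $Z(S_{\x}) = S_{\x} \times_{F_{\x}} DZ(S_{\x})$ recorded earlier, the centralizer is reductive with its semisimple part $DZ_G(x)$ governed by the closed root subsystem $\Phi(x) \subseteq \Phi$. The plan is therefore:
\begin{enumerate}
\item Reduce to $x \in T$ and associate to $x$ the subdiagram $\Delta_x$.
\item Observe that $\tilde{\Delta}$ is a \emph{finite} set, so there are only finitely many closed subsets $I \subseteq \tilde{\Delta}$, hence finitely many possible subroot systems $\Phi(x)$.
\item Show that two elements $x, x'$ with $\Phi(x) = \Phi(x')$ have conjugate centralizers, by producing a Weyl-group (or normalizer) element carrying one alcove face to the other.
\end{enumerate}
Collecting one representative subgroup $H_i$ for each of the finitely many subdiagrams yields the desired finite list $H_1, \ldots, H_k$.

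The step I expect to be the main obstacle is \textbf{step 3}: proving that the \emph{combinatorial} data $\Phi(x)$ actually pins down the centralizer up to \emph{conjugacy in $G$}, rather than merely up to abstract isomorphism. Two faces of the alcove can carry the same root-annihilation pattern yet sit in genuinely different positions, and one must verify that an element of $N_G(T)/T = \mathcal{W}$ (or of the affine Weyl group acting on the alcove) moves one to the other; the center $\mathcal{C}G$ and the diagram automorphisms $w_c$ discussed in Section I are precisely the tool for matching faces related by the action on $\tilde{D}(G)$. A secondary subtlety is handling the disconnectedness of $Z_G(x)$ when $G$ is not simply connected: here the component group $\pi_0(Z_G(x))$ is itself a subquotient of the Weyl group (as shown in the propositions of Section II), so it takes only finitely many values and does not disrupt the count.

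Finally, I note that the Strong Theorem immediately implies the Weak Theorem~\ref{Weaktheorem}: if there are only $k$ conjugacy classes of single-element centralizers, then along any strictly decreasing chain $G \supset Z(x_1) \supset Z(x_1,x_2) \supset \cdots$ each strict inclusion lands in one of finitely many possible reductive subgroups of strictly smaller dimension, so the length is uniformly bounded by a quantity $m = m(G)$ depending only on $G$ (indeed bounded by the rank plus the maximal number of nested closed subdiagrams). Thus the two theorems are proved together, with the finiteness of $\tilde{\Delta}$ as the single combinatorial engine.
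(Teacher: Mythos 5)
Your proposal follows essentially the same route as the paper: both reduce to an element of the maximal torus, encode the centralizer by the closed subset of the extended simple roots annihilating it, and derive finiteness from the Borel--de Siebenthal classification of closed subsystems of the extended Dynkin diagram, which the paper invokes explicitly as Theorem 3 of \cite{BS49}. One caution on your step 3: as literally stated it fails for non-simply-connected $G$ (in $SO(3)$ a generic torus element and the order-two rotation both have $\Phi(x)=\emptyset$ yet non-conjugate centralizers $SO(2)$ and $O(2)$), but the finiteness count survives because, as you essentially note, for each of the finitely many possible identity components $Z^0$ the full centralizer is one of the finitely many subgroups lying between $Z^0$ and $N_G(Z^0)$, the quotient $N_G(Z^0)/Z^0$ being finite.
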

\begin{proof} (Strong Theorem) 
Since $G$ is compact, there is a decomposition
$G=(\mathcal{C}G)^0\cdot_F DG$ where $F=(\mathcal{C}G)^0\cap DG$
is a finite group. Choosing $x\in (\mathcal{C}G)^0$ then $Z_G(x)=G$ 
hence it suffices to choose $x\in DG.$ If $x\in T$ is a non-central 
element for a choice of a maximal torus $T\subset G,$ then since $T$ is
abelian and $x$ is regular, $Z(x)=T$ is the smallest possible
subgroup of maximal rank. We claim that there are a finite
number of closed subgroups between $G$ and $T$ such that each is
of the form $Z(x_i)$ for some $x_i\in G.$ 

When $\tilde{x}$ corresponds to a vertex
or an edge with two central vertices, the element yields a maximal rank
subgroup. Thus choose $x_1$ as image under the exponential map of 
a non-central vertex. Then $H_1=Z(x_1)=DZ(x_1)\cdot\mathcal{C}^0Z(x_1).$
If $Z(x_1)$ is connected 
then we may choose $\tilde{x_2}$ as a vertex or a "central edge" which
gives $H_2=Z(x_1,x_2)$ where the rank of the semisimple part of the 
centralizer decreases by at least one. In other words, continuing to 
choose vertices or "central edges" such that the rank decreases, we
are following a connected path and at some point there will be some
$H_k$ such that the rank of the semisimple part of $H_k$ is zero. 
Thus $H_k$ is abelian and we have proven the claim. 

If $Z(x_1)$ is not connected, then we may restrict to the connected
component of the identity which is still reductive and the above 
analysis holds as long as there is no choice of element $x_j$ which 
is fixed by the finite abelian subgroup of the center of 
$H_{j-1}=Z(x_1,\ldots,x_{j-1}).$ Otherwise, if some $x_j\in 
Z(x_1,\ldots,x_{j-1})$
is fixed by this diagonal subgroup $\Delta$ then 
$H_j=Z(x_1\ldots,x_j)=DZ(x_1,\ldots,x_j)\cdot \mathcal{C}Z\rtimes\Delta$ 
is disconnected. If the semisimple part is not abelian, then we must 
choose elements in the semisimple part until it becomes abelian. At 
the point in which it does, say some $H_{k-1}$ where $1\le j < k-1,$
choosing $x_k\in\Delta$ makes $H_k$ into a finite abelian group and 
thus we have again shown that there are only a finite number of 
normal subgroups $H_1,\ldots,H_k$ in $G$ which are conjugate to the 
centralizer of some number of elements. 

$H_1=DZ(x)$ is a normal simple subgroup in $DG$ which is characteristic
in $DG$ because it lies in the center of $DG$ and the center is always
characteristic for any group. $H_1$ is also closed since it is a semisimple subgroup 
in $G$ compact. If $x$ is a semisimple element, then it is
conjugate to an element in the torus $T$ which implies that the 
connected component $Z^0(x)$ contains a torus $T'$ which is 
conjugate to $T.$ In other words,
$Z^0(x)=DZ^0(x)\cdot\mathcal{C}Z^0(x)$ where the
semisimple part corresponds to a subsystem subgroup and the other
is a commuting torus. To consider the remaining cases, we use the algorithm by
Borel and de Siebenthal (Theorem 3 \cite{BS49}) which determines all the closed subsystems
of $\Phi(G)$ (all the roots of $G$) for closed
subgroups of maximal rank. Take the extended
Dynkin diagram $\tilde{D}(G)$ and remove a finite collection of
nodes, repeating the process with all the connected components of the
resultant graph. These diagrams correspond to subdiagrams of the
subroot systems and there is one conjugacy class of subsystem   
subgroups for each such subsystem. In particular, if $G$ is connected,
compact then it possesses only finitely many conjugacy classes of subsystem
subgroups and hence the theorem is satisfied. It remains to prove what happens if $G$ is disconnected 
because a direct product of connected subgroups will each
have a finite number of subgroups each conjugate to $Z(x_i)$ for some
$x_i\in G.$ Note that there does not necessarily have to be any ordering on
these $H_i.$ In fact, it would be virtually impossible to impose
such a requirement because the $H_i$ are not necessarily 
subgroups of each other. 
If $G$ is the semidirect product of a connected group $U$ and a discrete
group $V$ then the above applies to the connected part and the only
centralizer for any of the elements in the discrete subgroup is the
entire discrete group. Thus there is a still finite number. If 
$G$ is disconnected, then there is a finite number of subgroups of 
type $Z_G(x)$ when restricting to the connected component of the 
identity. 

If $G$ is a disconnected group where both the clopen sets $U,V$
are connected, then there is still a finite number. If $U$ is connected and
$V$ is finite and disconnected, then count the number of such $H_i$ in each
of the smaller connected sets, ultimately leaving a disconnected set with exactly one point whose
centralizer is itself, since it is totally disconnected.
If $U$ is also totally disconnected there are a finite number of points $u_i\in U$ because $G$ is compact and 
thus $H_i$ will be conjugate to the  $u_i=Z(u_i).$ 
If $G=U\rtimes V$ where both $U,V$ are connected then there
exist $H_1,\ldots, H_l$ for $U$ and $K_1\ldots, K_r$ for $V$
such that each $H_i$ is conjugate to $Z(u_i)$ and $K_j$ is
conjugate to $Z(v_j),\ 1\le i\le l,\ 1\le j\le r.$
\end{proof}

For a simply connected, connected Lie group, the following
shows how theorem ~\ref{Strongtheorem} implies 
theorem ~\ref{Weaktheorem}.
 
\begin{proposition}
If $G$ is a compact, connected, simply connected Lie group and
there are a finite number of subgroups $H_1,\ldots,H_k\subset G$  
such that $Z_G(x)\cong H_i$ for some $i$ and for any $x\in G,$
then there is an integer $m$ such that any irredundant chain has
fewer than $m$ elements.
\end{proposition}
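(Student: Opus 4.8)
The plan is to translate an irredundant chain into a strictly decreasing chain of centralizers and then bound its length by the finite number of conjugacy classes supplied by Theorem~\ref{Strongtheorem}. Recall that a chain $x_1,\ldots,x_n$ is irredundant precisely when each $x_i$ is non-central in $Z_G(x_1,\ldots,x_{i-1})$, so that
$$G \supsetneq Z_G(x_1) \supsetneq Z_G(x_1,x_2) \supsetneq \cdots \supsetneq Z_G(x_1,\ldots,x_n)$$
is a chain of proper inclusions terminating once the centralizer becomes abelian. First I would record the elementary but crucial fact that a proper inclusion of centralizers forces the two groups into distinct conjugacy classes: if $K' = Z_K(y) \subsetneq K$ and $K'$ were conjugate to $K$, then $K'$ and $K$ would share the same dimension and the same number of components; equality of dimension forces $(K')^0 = K^0$ (both connected, of equal dimension, with $(K')^0 \subseteq K^0$), so $K'$ is a union of components of $K$, and properness then makes $\#\pi_0(K') < \#\pi_0(K)$, a contradiction. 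Hence the terms of the displayed chain are pairwise non-conjugate.

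Next I would produce a finite set $\mathcal{F}$ of conjugacy classes containing every iterated centralizer $Z_G(x_1,\ldots,x_i)$, for all tuples and all $i$. Since $G$ is simply connected, Theorem~\ref{Borel} gives that $Z_G(x_1)$ is connected, and Theorem~\ref{Strongtheorem} provides finitely many classes $H_1,\ldots,H_k$ for these single-element centralizers. Each $H_j$ is itself a compact Lie group, so Theorem~\ref{Strongtheorem} applies again inside $H_j$; because $Z_G(x_1,x_2) = Z_{Z_G(x_1)}(x_2)$, every two-element centralizer is conjugate to a centralizer inside some $H_j$, hence lies in a finite set. Iterating builds a tree of conjugacy classes, and I would prove this tree is finite by exhibiting a well-founded invariant. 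The natural candidate is the lexicographic pair $(\dim K, \#\pi_0(K))$: by the computation above, passing from $K$ to a proper centralizer $Z_K(y)$ either drops the dimension, or preserves it while strictly dropping the number of components. Both coordinates are bounded below, the first is bounded above by $\dim G$, and the uniform finiteness in Theorem~\ref{Strongtheorem} bounds the second, so the invariant can descend only finitely often; therefore the tree has finite depth and $\mathcal{F}$ is finite.

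Finally, since every term of the displayed chain lies in the finite set $\mathcal{F}$ and these terms are pairwise non-conjugate, the length $n$ is bounded by $|\mathcal{F}|$; setting $m = |\mathcal{F}| + 1$ yields the desired uniform bound independent of the chosen tuple, which is exactly the cardinality $m(G)$ of $G$. I expect the main obstacle to be the control of disconnected centralizers: after the first step the groups $Z_G(x_1,\ldots,x_i)$ need no longer be connected, so their component groups may grow as one descends, and one must rule out an infinite sequence of dimension-preserving steps. This is precisely what the lexicographic invariant handles, but making it rigorous requires that the number of components of every iterated centralizer be uniformly bounded, and the only clean source of that bound is the finiteness furnished by Theorem~\ref{Strongtheorem} itself; the interplay between that finiteness and the component count is the delicate point, whereas Borel's connectedness (Theorem~\ref{Borel}) is used only to make the first step of the descent start from a connected group and thereby simplify the base case.
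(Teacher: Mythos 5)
Your proposal is correct and follows essentially the same route as the paper: both arguments rest on Theorem~\ref{Strongtheorem}'s finiteness of centralizer conjugacy classes together with descent on the lexicographic pair $(\dim K, \#\pi_0(K))$, the paper phrasing this as an induction yielding $m(G)\le 1+\max_H m(H)$ over the finitely many $H=Z_G(x)$, while you bound the chain length by counting pairwise non-conjugate terms in the finite tree of iterated centralizer classes. The only real difference is packaging --- your explicit observation that a proper inclusion of conjugate closed subgroups is impossible is present in the paper only implicitly, in the remark that $\dim H^0=\dim G^0$ forces $H^0=G^0$ and hence $\#\pi_0(H)<\#\pi_0(G)$.
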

\begin{proof}
Since $G$ is connected, it is sufficient to choose $x\in T$
such that
$x={\rm exp}(\tilde{x})$ where $\tilde{x}$ is in the alcove $A.$
Since we are only dealing with commuting elements in the chain,  
we are concerned with what happens in the walls of the alcove.
The proof is by induction. Define a pair of numbers
$(d,n)$ lexicographically where $d$ represents the dimension of
the subgroup and $n$ represents the number of components.
It is vacuously true for dimension one (we could choose $S^1$
if we wanted to make it slightly less trivial since $S^1=T^1$ is
the $1$-dimensional torus in the Lie group). Suppose that the
Weak theorem is true, i.e. that for every $H\subsetneqq G$ there
exists and $m$ for any irredundant chain. Note that if we let 
$(x_1,\ldots,x_k)$ be an irredundant chain in $G$ such 
that $Z_G(x_1)=H,\ x_1\not\in\mathcal{C}G,$ then since we have a
chain of commuting elements, $(x_2,\ldots,x_k)$ is an irredundant
chain in $H$ which, by induction, implies that $k\le m(H) + 1.$ Hence
any irredundant chain in $G$ has length
$m \le 1 + {\rm max}(m(H))$ where the maximum is over the
subgroups $H$ strictly contained in $G$ and where $H=Z_G(x)$ for
some element $x\in G.$  

Now consider the connected component of the identity, $H^0\subset G^0.$
By the lexicographic ordering, ${\rm dim}(H^0)\le {\rm dim}(G^0)$
if either ${\rm dim}(H^0)<{\rm dim}(G^0)$ or their dimensions are
equal and $n\le n'.$ But
${\rm dim}(H^0)={\rm dim}(G^0)$ if and only if $H^0=G^0.$ So
$G^0\subset H\subsetneqq G$
which implies that the components $\pi_0(H)\underset{\neq}{<}\pi_0(G).$
Thus we have shown that the Strong theorem implies the Weak theorem.
\end{proof}

\def\cprime{$'$}
\def\p{{\mathbb P}}

\end{document}